\documentclass[a4paper,11pt]{article}

\usepackage[top=3.0cm, bottom=3.0cm, inner=3.0cm, outer=3.0cm,
includefoot]{geometry}

\usepackage{verbatim}
\usepackage{caption}
\usepackage{url}
\usepackage{amsmath}
\usepackage{geometry}
\usepackage{amssymb}
\usepackage{amsmath}
\usepackage{graphicx}
\usepackage{amsthm}
\usepackage{bbm}
\usepackage{float}
\usepackage{color,soul}
\usepackage{hyperref}
\usepackage[T1]{fontenc}
\usepackage[utf8]{inputenc}
\usepackage{authblk}
\usepackage{booktabs}
\usepackage{longtable}
\usepackage{enumerate}
\usepackage{tikz}
\usetikzlibrary{arrows}
\usetikzlibrary{decorations.markings}
\usepackage{standalone}
\usepackage{indentfirst}
\usepackage{mathtools}

\newcommand{\eChar}{\begin{enumerate}[(i)]}
\newcommand{\eCharR}{\begin{enumerate}[(a)]}
\newcommand{\eBr}{\begin{enumerate}[(1)]}

\newcommand\floor[1]{\left\lfloor #1\right\rfloor}
\newcommand\ceil[1]{\left\lceil #1\right\rceil}

\title
{An extremal theorem for positive curvature of graphs}

\author[1]{Kaizhe Chen\thanks{Email: ckz22000259@mail.ustc.edu.cn}}
\author[2]{Shiping Liu\thanks{Email: spliu@ustc.edu.cn}}
\author[3]{Zhe You\thanks{Email: y30231280@mail.ecust.edu.cn}}
\affil[1]{School of the Gifted Young, University of Science and Technology of China}
\affil[2]{School of Mathematical Sciences, University of Science and Technology of China}
\affil[3]{School of Mathematics, East China University of Science and Technology}
\date{}

\theoremstyle{plain}
\newtheorem{lemma}{Lemma}[section]
  {%
   \endlemma
   \endgroup}
   
\newtheorem{theorem}[lemma]{Theorem}

\theoremstyle{definition}

\newtheorem{claim}{Claim}[lemma]

\newtheorem{definition}[lemma]{Definition}

\newtheorem{problem}[lemma]{Problem}

\newtheorem{remark}[lemma]{Remark}

\numberwithin{equation}{section}

\begin{document}
\maketitle

\begin{abstract}
We prove an extremal theorem  for positive Ollivier/Lin--Lu--Yau curvature: every graph of order \(n\geq 8\) with more than
\[
  T(n)=\frac{n^2-3n}{2}-\left\lceil\frac{n}{2}\right\rceil+2
\]
edges has positive Ollivier/Lin--Lu--Yau curvature, and this threshold is optimal. 
Moreover, for even $n\geq 12$, there exists a unique graph with $T(n)$ edges that has an edge with non-positive curvature.
For $n=8,10$ and odd $n\geq 9$, the extremal graphs are not unique.
This suggests a new class of extremal graph-theoretic problems arising from discrete curvature notions.
\end{abstract}

\textbf{Keywords:}  discrete Ricci curvature, extremal graph theory.

\textbf{Mathematics Subject Classification:} 05C35, 53A70

\section{Introduction}
Determining the maximum or minimum number of edges in a graph under prescribed
restrictions is a central theme in extremal graph theory. This perspective has led to many fundamental insights into discrete structures~\cite{Bollobas, Simonovits}. 
Classical examples include Tur\'an-type problems, where the
restriction is given by forbidding certain subgraphs. 
More recently, spectral
extremal problems, in which the restrictions are imposed on eigenvalues of graphs, have also attracted considerable attention~\cite{spectral-extremal,Nikiforov}.

Graphs satisfying discrete curvature restrictions have been studied extensively.
In particular, the combinatorial structure of graphs with lower Ricci curvature bounds has attracted much attention in recent years; see \cite{NR17} and the references therein. Roughly speaking, positive or large discrete curvature tends to force a graph to have many edges.

In this paper, we establish an extremal result for a discrete
Ricci curvature, namely the Ollivier/Lin--Lu--Yau curvature. The
Lin--Lu--Yau curvature~\cite{LLY11} is a modified version of Ollivier's Ricci
curvature~\cite{O09} on graphs. 
For an edge \(xy\), let \(\mu_x^p\) and \(\mu_y^p\) be the \(p\)-lazy random walk measures at \(x\) and \(y\), respectively. 
The corresponding \(p\)-Ollivier curvature is
\[
  \kappa_p(x,y)
  :=
  1-W(\mu_x^p,\mu_y^p),
\]
where \(W\) denotes the $L^1$-Wasserstein distance. 
The Lin--Lu--Yau curvature
\(\kappa_{\mathrm{LLY}}(x,y)\) is obtained from the behavior of
\(\kappa_p(x,y)\) as \(p\to 1\). It is known that $\kappa_{LLY}(x,y)$ is positive if and only if $\kappa_p(x,y)$ is positive for some $p$ close to $1$. In particular we have 
\[
  \kappa_{\mathrm{LLY}}(x,y)>0
  \quad
  \Longleftrightarrow
  \quad
  \kappa_{1/2}(x,y)>0;
\]
see \cite{BCLMP18} or \eqref{eq:bourne} below for more precise interval of $p$. For connectivity, diameter, eigenvalues, and other combinatorial properties of graphs
with positive Ollivier/Lin--Lu--Yau curvature, see
\cite{CLY,CKKLMP20,Hehl-regular,MW19} and the references therein.

A graph is said to have positive Lin--Lu--Yau curvature if
\(\kappa_{\mathrm{LLY}}(x,y)>0\) for every edge \(xy\). We now state our curvature extremal result.
\begin{theorem}\label{main}
 For any integer $n$, set 
 \begin{equation*}
    T(n)=\frac{n^2-3n}{2}-\left\lceil\frac{n}{2}\right\rceil+2.
\end{equation*}
Then for any $n\geq 8$, any graph with $n$ vertices and more than $T(n)$ edges has positive Lin--Lu--Yau curvature.
Moreover, for even $n\ge 12$, there exists a unique graph with $n$ vertices and $T(n)$ edges that has an edge with non-positive Lin--Lu--Yau curvature.
\end{theorem}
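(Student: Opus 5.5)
Since $T(n)=\binom{n}{2}-\left(n+\lceil n/2\rceil-2\right)$, the theorem says: a graph whose complement $\bar G$ has at most $n+\lceil n/2\rceil-3$ edges has positive curvature. I will therefore work in the complement, with one edge $xy$ of $G$ fixed, and show that $\kappa_{1/2}(x,y)\le 0$ forces $\bar G$ to have at least $n+\lceil n/2\rceil-2$ edges. By the equivalence $\kappa_{\mathrm{LLY}}>0\iff\kappa_{1/2}>0$ recalled in the introduction, this suffices. By Kantorovich duality, $\kappa_{1/2}(x,y)\le 0$ means there is a $1$-Lipschitz $f$ with $\mathbb{E}_{\mu_y}f-\mathbb{E}_{\mu_x}f\ge 1$. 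Equivalently, no transport plan of cost $<1$ exists. A plan uses triangles through $xy$ at cost $0$, $4$-cycles $x u v y$ at cost $1$, and so on, all weighted by the degrees of $x$ and $y$.

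Write $A=N(x)\setminus N[y]$, $B=N(y)\setminus N[x]$, $C=N(x)\cap N(y)$ and $R=V\setminus(N[x]\cup N[y])$. The non-edges forced by the definitions are those between $x$ and $B\cup R$ and between $y$ and $A\cup R$, giving $|A|+|B|+2|R|$ non-edges. Positivity of the curvature fails only if the edges between $A$ and $B$ miss a perfect-type matching structure. Concretely, the mass left on $A$ cannot all be moved to $B$ at distance $1$. By Hall's theorem, this produces subsets $A'\subseteq A$ and $B'\subseteq B$ whose $A'$–$B'$ edges are all absent, contributing $|A'||B'|$ further non-edges. I will derive the exact criterion in terms of $d_x$, $d_y$, $|C|$, $|A|$ and $|B|$. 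The $1/2$-lazy measure places mass $1/(2d_x)$ on each neighbour and $1/2$ on the point itself. The idle mass at $x$ and $y$ can move along $xy$ itself, so the condition becomes a matching deficiency of size roughly $\bigl||A|-|B|\bigr|$ plus mismatches between the two weights $1/d_x$ and $1/d_y$.

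Next I minimise the total forced non-edge count $|A|+|B|+2|R|+(\text{Hall deficiency})$ subject to $|A|+|B|+|C|+|R|=n-2$. The expected optimiser has $R=\emptyset$ and $|A|\approx|B|\approx n/2$. The $A$–$B$ bipartite graph must then be missing enough edges: roughly $|A|+|B|$ of them, up to the rounding that produces the $\lceil n/2\rceil-2$. Carrying this out gives $|\bar E|\ge n+\lceil n/2\rceil-2$. Small cases where $|C|$ or $|R|$ is large will be handled by cruder counting, since a large $R$ already costs $2|R|$ and pushes $x,y$ far apart in $\bar G$-terms. The threshold $n\ge 8$ should be exactly where these crude bounds overtake the main case.

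For the extremal part with even $n\ge12$, I will construct the graph. Take $|A|=|B|=(n-2)/2$ with $C=R=\emptyset$, and let the $A$–$B$ edges form a complete bipartite graph minus a perfect matching minus one extra edge (or a similar configuration). The construction is tuned so that an explicit $1$-Lipschitz $f$, namely $f=0$ on $x$ and $A$, $f=1$ on $y$ and $B$, adjusted on the endpoints of the deleted edges, gives $\kappa_{1/2}=0$. For uniqueness I will trace the equality cases of the minimisation. Every inequality must be tight, which pins down $R=\emptyset$, $|C|$, $|A|=|B|$ and the deficiency pattern. For even $n\ge12$ this leaves a single configuration up to isomorphism; the smaller or odd cases are where ties arise. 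I expect the main obstacle to be the exact Hall/transport analysis with unequal degrees $d_x\ne d_y$, where the fractional weights make the deficiency count delicate. I will first try to reduce to $d_x=d_y$ by arguing that imbalance only increases the forced non-edges.
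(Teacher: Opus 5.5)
There is a genuine gap, and it sits where the content of the theorem lies. The sentence ``carrying this out gives $|E(\overline G)|\ge n+\lceil n/2\rceil-2$'' is asserted rather than derived, and the structural predictions around it are wrong.

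The extremal configuration is not balanced. For even $n\ge 12$ the unique extremal graph has $C=R=\emptyset$, $|B|=1$ and $|A|=n-3$, hence $d_x=n-2$ and $d_y=2$. Concretely, take $V=\{x,y,w\}\cup V_1\cup V_4$ with $|V_1|=n/2$ and $|V_4|=n/2-3$. Let $\{x\}\cup V_1\cup V_4$ be a clique, $N(y)=\{x,w\}$ and $N(w)=\{y\}\cup V_4$. Its complement has $1+(n-3)+n/2$ edges. The function $f=-1$ on $V_1$, $f=0$ on $\{x\}\cup V_4$, $f=1$ on $\{y,w\}$ is $1$-Lipschitz with $\Delta f(x)-\Delta f(y)=\frac{1-n/2}{n-2}+\frac12=0$.

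Your construction, by contrast, is positively curved at $xy$. Put $m=(n-2)/2\ge 5$ and $d=m+1$. Then $K_{m,m}$ minus a perfect matching is $(m-1)$-regular bipartite, hence a disjoint union of $m-1\ge 2$ perfect matchings. After deleting one more edge, a perfect matching $a_i\mapsto b_{\sigma(i)}$ survives. Keep $\frac{1-p}{d}$ at $x$ and at $y$, send $p-\frac{1-p}{d}$ from $x$ to $y$, and send each $a_i$ to $b_{\sigma(i)}$. This gives $W(\mu_x^p,\mu_y^p)\le p+(m-1)\frac{1-p}{m+1}$, i.e.\ $\kappa_{\mathrm{LLY}}(x,y)\ge \frac{2}{m+1}>0$.

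For the same reason, your planned reduction to $d_x=d_y$ is backwards: imbalance is what makes non-positive curvature cheap. Suppose $d_x=d_y$ and $C=R=\emptyset$, so $n=2m+2$. Plans of the above type show that $\kappa_{\mathrm{LLY}}(x,y)\le 0$ forces one of two things. Either the $A$--$B$ graph has matching number at most $m-2$, which by K\"onig gives an empty $s\times t$ block with $s+t\ge m+2$, hence at least $2m$ non-edges. Or some pair $a\in A$, $b\in B$ has no common neighbour, which gives at least $2m-1$ non-edges inside $A\cup B$. Either way $|E(\overline G)|\ge 4m-1=2n-5$, which exceeds $n+\lceil n/2\rceil-2$ for $n\ge 8$. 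So no balanced ``similar configuration'' can work either.

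When $d_x>d_y$, the picture changes. In a simple plan with $\pi(x,y)=p-\frac{1-p}{d_y}$, the vertex $y$ must still absorb $\frac{1-p}{d_y}-\frac{1-p}{d_x}$ from $A$ at distance $2$. Each vertex of $C$ must absorb the same amount from $A$. So the receivers include $y$ and $C$, and the Hall-type obstruction is weighted: the paper needs at least $1+|C|+d_x/d_y$ senders that move mass at distance $\ge 2$. A single unweighted count $|A'||B'|$ over $A\times B$ does not capture this.

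The paper supplies the missing pieces:
\begin{enumerate}[(i)]
\item a simple optimal plan with $\pi(x,y)=p-\frac{1-p}{d_y}$ that maximises the mass moved at distance $1$;
\item a separate edge count excluding transport at distance $3$;
\item classes $V_1,\dots,V_5$ with $E(V_1,V_2)=E(V_3,V_2)=E(V_1,V_5)=\emptyset$, together with weighted lower bounds on $|V_1|,|V_2|,|V_5|$ and a trade-off term between the last two;
\item a numerical lemma whose equality case gives uniqueness.
\end{enumerate}
Without some version of these, your outline establishes neither the bound nor the extremal characterisation.
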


We remark that the assumption \(n\geq 8\) is necessary. Moreover, the extremal graph is not unique for $n=8,10$ and for all odd $n\ge 9$, as shown in Section~\ref{section:sharpness}. 

The condition for positive curvature in Theorem~\ref{main} can be restated in terms of the number of edges in the complement. More precisely, for a graph $G$ of order \(n\geq 8\), if its complement \(\overline{G}\) has at most
\[
  n+\left\lceil\frac{n}{2}\right\rceil-3
\]
edges, then \(G\) has positive Lin--Lu--Yau curvature.
A closely related criterion for positive Lin--Lu--Yau curvature, formulated in terms of forbidden subgraphs in the complement, was obtained in~\cite{CLY26}.

Theorem~\ref{main} suggests a new class of extremal graph-theoretic
problems arising from discrete Ricci curvature notions; see
Section~\ref{section:extremal-problems} for further discussion.

We outline the strategy of the proof of Theorem~\ref{main}. Suppose, to the contrary, that some edge \(xy\) has non-positive Lin--Lu--Yau curvature. We first select a special transport plan described in Lemma~\ref{maximizer}. The extremality of this plan forces forbidden adjacencies between certain vertex classes in the neighborhoods of $x$ and $y$. The non-positivity of curvature then yields lower bounds on the sizes of these classes. Combining this with the assumed bound on the number of edges reduces the desired contradiction to a numerical inequality proved in Lemma~\ref{computation}.

While this paper was being written, we became aware of a closely related recent article \cite{Yamada}. In that article, the authors proved that the graphs obtained from the complete graph by deleting a matching, a star, or a cycle have non-positive $0$-Ollivier curvature. They further proposed the question of determining which edges, and how many, must be removed from a complete graph so that some edges attain non-positive Ollivier curvature. 

Throughout the paper, we use the following notation. 
Let $G=(V,E)$ be a simple finite graph.
Denote by $\overline{G}$ the complement graph of $G$.
For any $x\in V$, let $N(x)$ be the set of neighbors of $x$ and let $d_x:=|N(x)|$ be its degree. 
For any two vertices $x$ and $y$, we denote the distance between them by $\rho(x,y)$. 
Let $\delta_{xy}: V\times V\to [0,1]$ denote the function defined as follows:
    \[\delta_{xy}(a,b)=\left\{
                \begin{array}{ll}
                    1, & \hbox{if $a=x$, $b=y$;}\\
                    0, & \hbox{otherwise.}
                \end{array}
                \right.
    \]
For two fixed vertices $x$ and $y$, we set $A_{xy}\coloneqq N(x)\cap N(y)$, $N^{(xy)}_x\coloneqq N(x)\backslash (A_{xy}\cup \{ y \})$, and $N^{(xy)}_y\coloneqq N(y)\backslash (A_{xy}\cup \{ x \})$.
For simplicity, we write $N_x$ for $N^{(xy)}_x$ and write $N_y$ for $N^{(xy)}_y$ hereafter.

\section{Preliminaries}
Before introducing the Lin--Lu--Yau curvature, we first recall the definition of the Wasserstein distance.
\begin{definition}[$L^1$-Wasserstein distance]
     Let $G=(V,E)$ be a locally finite graph, $\mu_1$ and $\mu_2$ be two probability measures on $G$. 
     The {\it $L^1$-Wasserstein distance} $W(\mu_1, \mu_2)$ between $\mu_1$ and $\mu_2$ is defined as
     \begin{align}\label{defi}
         W(\mu_1,\mu_2)=\inf_{\pi}\sum_{u\in V}\sum_{v\in V}\rho(u,v)\pi(u,v),
     \end{align}
     where the infimum is taken over all the mappings $\pi: V\times V\to [0,1]$ satisfying
     $$\mu_1(u)=\sum\limits_{v\in V}\pi(u,v) \text{ for any}\ u\in V$$
     and
     $$\mu_2(v)=\sum\limits_{u\in V}\pi(u,v) \text{ for any}\ v\in V.$$ 
     Such a mapping is called a {\it transport plan} from $\mu_1$ to $\mu_2$. 
     A transport plan that attains the infimum in \eqref{defi} is called {\it optimal}.
     We call a transport plan {\it simple} if, for each vertex $u\in V$, we have
     $$\pi(u,u)=\min\{ \mu_1(u), \mu_2(u)\}.$$
\end{definition}
 Here, for a given idleness parameter $p\in [0,1]$, we consider the particular measure $\mu_x^p$ around a vertex $x\in V$ defined as follows:
    \[\mu_x^p(y)=\left\{
                    \begin{array}{ll}
                      p, & \hbox{if $y=x$;} \\
                      \frac{1-p}{d_x}, & \hbox{if $xy\in E$;} \\
                      0, & \hbox{otherwise.}
                    \end{array}
                  \right.
     \]
     
   Based on the probability measure above, two kinds of Ricci curvature on graphs are defined as follows.
\begin{definition}[$p$-Ollivier curvature and Lin--Lu--Yau curvature] 
Let $G=(V,E)$ be a locally finite graph. 
For any two distinct vertices $x,y$ in $G$, the {\it $p$-Ollivier curvature} $\kappa_p(x,y)$, $p\in [0,1]$, is defined as
     \[\kappa_p(x,y)=1-\frac{W(\mu_x^p,\mu_y^p)}{\rho(x,y)}.\]
The {\it Lin--Lu--Yau curvature} $\kappa_{LLY}(x,y)$ is defined as
     \[\kappa_{LLY}(x,y)=\lim_{p\to 1}\frac{\kappa_p(x,y)}{1-p}.\]
\end{definition}
It is worth noting that the ratio $\kappa_p(x,y)/(1-p)$ is constant when $p$ is large enough. 
In fact, it was proved in \cite{BCLMP18} that
\begin{equation}\label{eq:bourne}
  \kappa_{LLY}(x,y)=\frac{\kappa_p(x,y)}{1-p} \,\,\text{for any $p \in \left[\frac{1}{\max\{d_x,d_y\}+1},1\right]$}.
\end{equation}

For any locally finite graph $G$, the  normalized graph Laplacian $\Delta$ is defined as $$\Delta f(x):=\frac{1}{d_x} \sum_{y: xy\in E(G)}(f(y)-f(x)), \text{ for any $f: V(G)\to \mathbb{R}$ and any $x\in V(G)$}.$$
 There is another limit-free formulation of Lin--Lu--Yau curvature, which was given by M\"{u}nch and Wojciechowski \cite{MW19}. 
 \begin{theorem}[Curvature via Laplacian {\cite[Corollary 2.2]{MW19}}]\label{Curvature via the Laplacian}
     Let $G$ be a locally finite graph and let $xy$ be an edge. Then
     $$\kappa_{LLY}(x, y)=\inf _{\substack{f:N(x)\cup N(y)\to \mathbb{Z}\\f \in Lip(1) \\ f(y)-f(x)=1}} \left(\Delta f(x)-\Delta f(y)\right).$$
 \end{theorem}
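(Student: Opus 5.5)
The plan is to derive the formula from Kantorovich duality for $\kappa_p$, and then let $p\to 1$ using a finite-dimensional polytope whose vertices are integral.

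\emph{Duality and reduction to a finite polytope.} Kantorovich duality gives
\[
W(\mu_x^p,\mu_y^p)=\sup_{f\in Lip(1)}\Big(\sum_v f(v)\mu_y^p(v)-\sum_v f(v)\mu_x^p(v)\Big).
\]
Since $\sum_v f(v)\mu_x^p(v)=f(x)+(1-p)\Delta f(x)$, this yields
\[
\kappa_p(x,y)=\inf_{f\in Lip(1)}\Big(1-(f(y)-f(x))+(1-p)\big(\Delta f(x)-\Delta f(y)\big)\Big).
\]
The expression only involves the values of $f$ on the finite ball $B=N(x)\cup N(y)$, which contains $x$ and $y$ because $xy\in E$. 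A function on $B$ that is $1$-Lipschitz for $\rho$ extends to a $1$-Lipschitz function on all of $V$ by the McShane extension $\tilde f(v)=\min_{u\in B}(f(u)+\rho(u,v))$. Hence we may take the infimum over the set
\[
P=\{f:B\to\mathbb R:\ f(x)=0,\ f(u)-f(v)\le\rho(u,v)\ \text{for all } u,v\in B\}.
\]
Normalizing $f(x)=0$ is harmless since the objective is invariant under adding constants. The set $P$ is a bounded polyhedron: every $u\in B$ is within distance $2$ of $x$, so $|f(u)|\le 2$.

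\emph{Integrality of vertices (the key step).} The constraints defining $P$ are difference constraints $f(u)-f(v)\le\rho(u,v)$ together with $f(x)=0$. Their coefficient matrix is the incidence matrix of a directed graph, with one extra unit row for $f(x)=0$. This matrix is totally unimodular, and all right-hand sides $\rho(u,v)$ are integers. Therefore every vertex of $P$ is integer-valued, and the same holds for every face of $P$ cut out by additional integral difference equalities. In particular it holds for the face $F=P\cap\{f(y)-f(x)=1\}$, which is nonempty (take $f=\rho(x,\cdot)$). I expect this to be the main technical point: one must check carefully that total unimodularity survives the restriction to the face. I would first try doing this directly through the network-matrix structure, and otherwise via a shortest-path (potential) argument.

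\emph{Passing to the limit.} A linear objective on a polytope attains its minimum at a vertex. Let $\mathcal V$ be the finite set of vertices of $P$. Then
\[
\frac{\kappa_p(x,y)}{1-p}=\min_{f\in\mathcal V}\Big(\frac{1-(f(y)-f(x))}{1-p}+\Delta f(x)-\Delta f(y)\Big).
\]
For an integral vertex $f$, the Lipschitz condition together with $x\sim y$ gives $f(y)-f(x)\in\{-1,0,1\}$.
\begin{itemize}
\item If $f(y)-f(x)\le 0$, the first term is at least $1/(1-p)\to\infty$, while $|\Delta f(x)-\Delta f(y)|\le 2$ uniformly in $p$. Such vertices therefore do not contribute to the minimum for $p$ close to $1$.
\item The remaining vertices satisfy $f(y)-f(x)=1$. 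For them the expression is independent of $p$, namely $\Delta f(x)-\Delta f(y)$.
\end{itemize}
Hence $\kappa_{LLY}(x,y)=\min_{f\in\mathcal V,\ f(y)-f(x)=1}\big(\Delta f(x)-\Delta f(y)\big)$.

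\emph{Identifying the minimum with the stated infimum.} The vertices of $P$ with $f(y)-f(x)=1$ are exactly the vertices of the face $F$. The minimum of the linear functional $\Delta f(x)-\Delta f(y)$ over $F$ equals the minimum over its vertices, which are integral by the previous step. Moreover, every integer-valued $1$-Lipschitz $f$ with $f(y)-f(x)=1$ lies in $F$ after normalizing $f(x)=0$. Consequently the infimum over integer-valued $f\in Lip(1)$ with $f(y)-f(x)=1$ coincides with this minimum, which proves the formula.
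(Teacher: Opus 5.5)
The paper does not prove this statement. It quotes it from \cite{MW19} and uses it only in the easy direction: at the end of Section 4 and for $G_1,G_2$ in Section 5, it bounds $\kappa_{LLY}(x,y)$ from above by $\Delta f(x)-\Delta f(y)$ for an explicit test function. Your proposal is a correct, self-contained proof, so the comparison is between your argument and an imported citation.

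For the paper's purposes, weak duality alone suffices. Any $f$ on $N(x)\cup N(y)$ that is $1$-Lipschitz with $f(y)-f(x)=1$ gives $W(\mu_x^p,\mu_y^p)\ge 1+(1-p)(\Delta f(y)-\Delta f(x))$. Hence $\kappa_p(x,y)/(1-p)\le \Delta f(x)-\Delta f(y)$ for every $p<1$, and the same bound holds in the limit. This is just the first line of your argument applied to a single $f$.

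Your full route proves both directions at once. It uses LP duality on the finite set $B$, then the finiteness of the vertex set of the $p$-independent polytope $P$, then integrality of those vertices. As a by-product, it shows that $\kappa_p/(1-p)$ is constant for $p$ close to $1$; your bound $|\Delta f(x)-\Delta f(y)|\le 2$ gives this for $p\ge 3/4$. That is a self-contained but weaker version of \eqref{eq:bourne}, which the paper imports from \cite{BCLMP18} with the sharp threshold $1/(\max\{d_x,d_y\}+1)$.

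The step you flagged as the main technical point is immediate. The inequality $f(y)-f(x)\le\rho(x,y)=1$ is itself one of the constraints defining $P$. So $F$ is a face of $P$, every vertex of $F$ is a vertex of $P$, and hence these vertices are integral. No separate total-unimodularity check is needed for $F$.

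Integrality also plays no role in the limit step itself. There are only finitely many vertices with $f(y)-f(x)<1$, so they have a uniform positive gap, and their terms blow up as $p\to 1$. Integrality is only needed for the final restriction to $\mathbb{Z}$-valued $f$. There you could replace total unimodularity by the rounding identity $f=\int_0^1\lfloor f+t\rfloor\,dt$. Each $\lfloor f+t\rfloor$ is integer-valued, is $1$-Lipschitz because $\rho$ is integer-valued, and still satisfies $f(y)-f(x)=1$. By linearity of $\Delta f(x)-\Delta f(y)$, the integer infimum is therefore at most the real one.
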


The following lemma is convenient for our proof.

\begin{lemma}[{\cite[Corollary 3.6]{Hehl-regular}}]\label{triangle}
    Let $G=(V, E)$ be a locally finite graph. 
    Let $x, y \in V$ be two adjacent vertices of equal degree $d$. 
    Then $\kappa_{LLY}(x, y) \in \mathbb{Z} / d$. 
    Furthermore,
$$
-2+\frac{4}{d}+ \frac{3|A_{x y}|}{d} \leq \kappa_{LLY}(x, y) \leq \frac{2+|A_{xy}|}{d} .
$$
\end{lemma}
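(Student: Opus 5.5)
The plan is to derive both assertions from the limit-free formula of Theorem~\ref{Curvature via the Laplacian}, using that $d_x=d_y=d$ so that both Laplacians carry the same normalising factor $1/d$. Fix an admissible $f$, i.e.\ $f:N(x)\cup N(y)\to\mathbb{Z}$ is $1$-Lipschitz with $f(y)-f(x)=1$. The quantity $\Delta f(x)-\Delta f(y)$ is invariant under adding a constant to $f$, so I normalise $f(x)=0$ and $f(y)=1$.

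\textbf{Expansion of $d(\Delta f(x)-\Delta f(y))$.} Split $N(x)=\{y\}\cup A_{xy}\cup N_x$ and $N(y)=\{x\}\cup A_{xy}\cup N_y$. Then
\[
d\,\Delta f(x)=1+\sum_{a\in A_{xy}}f(a)+\sum_{u\in N_x}f(u),
\]
\[
d\,\Delta f(y)=-1+\sum_{a\in A_{xy}}(f(a)-1)+\sum_{v\in N_y}(f(v)-1).
\]
Subtracting gives
\[
d\bigl(\Delta f(x)-\Delta f(y)\bigr)=2+|A_{xy}|+\sum_{u\in N_x}f(u)-\sum_{v\in N_y}\bigl(f(v)-1\bigr).
\]
The right-hand side is an integer.

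\textbf{Membership in $\mathbb{Z}/d$.} Every vertex of $N(x)\cup N(y)$ is adjacent to $x$ or to $y$. By the Lipschitz condition, $f$ therefore takes values in $\{-1,0,1,2\}$. Hence there are only finitely many admissible $f$, and the infimum in Theorem~\ref{Curvature via the Laplacian} is a minimum. By the identity above, this minimum is an integer divided by $d$, so $\kappa_{LLY}(x,y)\in\mathbb{Z}/d$.

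\textbf{Upper bound.} Take $f=0$ on $\{x\}\cup N_x$ and $f=1$ on $\{y\}\cup A_{xy}\cup N_y$. All values lie in $\{0,1\}$, so $f$ is $1$-Lipschitz and admissible. Both sums in the identity vanish, giving $\kappa_{LLY}(x,y)\le (2+|A_{xy}|)/d$.

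\textbf{Lower bound.} For an arbitrary admissible $f$:
\begin{itemize}
\item each $u\in N_x$ is adjacent to $x$, so $f(u)\ge -1$;
\item each $v\in N_y$ is adjacent to $y$, so $f(v)-1\le 1$.
\end{itemize}
Since $|N_x|=|N_y|=d-1-|A_{xy}|$, we get
\[
d\bigl(\Delta f(x)-\Delta f(y)\bigr)\ge 2+|A_{xy}|-2(d-1-|A_{xy}|)=4+3|A_{xy}|-2d.
\]
Dividing by $d$ and taking the infimum yields $\kappa_{LLY}(x,y)\ge -2+\tfrac{4}{d}+\tfrac{3|A_{xy}|}{d}$.

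There is no real obstacle here. The only points needing care are the bookkeeping in the split of the neighbourhoods and the observation that Lipschitz constraints coming from edges inside $N(x)\cup N(y)$ are irrelevant for the lower bound, since we only discard them.
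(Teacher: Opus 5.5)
Your proof is correct. The paper gives no proof of this lemma; it quotes it from Hehl's Corollary~3.6, so there is no internal argument to compare against. What you have written is a complete derivation from the M\"unch--Wojciechowski formula (Theorem~\ref{Curvature via the Laplacian}), which the paper already states, so it could replace the citation.

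The individual steps are right:
\begin{itemize}
\item The normalisation $f(x)=0$, $f(y)=1$ and the resulting expansion of $d\bigl(\Delta f(x)-\Delta f(y)\bigr)$ are correct.
\item Your $\{0,1\}$-valued test function is $1$-Lipschitz for the distance $\rho$ of $G$, whatever the edges inside $N(x)\cup N(y)$ are.
\item The pointwise constraints $f(u)\ge -1$ on $N_x$ and $f(v)\le 2$ on $N_y$ give the lower bound.
\end{itemize}

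Working on the dual side is what keeps the argument short. Integrality of the admissible $f$ gives $\kappa_{LLY}(x,y)\in\mathbb{Z}/d$ immediately. A transport-side proof would need more set-up: take $p=\frac{1}{d+1}$ in \eqref{eq:bourne}, so that both measures are uniform on $d+1$ atoms, and use Birkhoff--von Neumann to get an optimal plan that is a bijection $\{x\}\cup N(x)\to\{y\}\cup N(y)$. The two bounds then come from checking that the cost is at least $|N_x|$ for every bijection, and at most $3|N_x|$ for one fixing $\{x,y\}\cup A_{xy}$.

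Your lower bound also needs no positive-part correction. When $N_x=N_y=\emptyset$ it gives $\kappa_{LLY}(x,y)\ge\frac{d+1}{d}$, which equals the upper bound. This is exactly the case the paper invokes in the proof of Lemma~\ref{|V_2|}.
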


The following lemma is useful in calculating the Lin--Lu--Yau curvature.

\begin{lemma}[{\cite[Lemma 4.1]{CLY}}]\label{simple}
    Let $G$ be a graph. 
    Let $x$ and $y$ be two adjacent vertices in $G$ with $d_x\ge d_y$. 
    For any $p\in \left[ \frac{1}{1+d_y},1 \right]$, there is a simple optimal transport plan $\pi$ from $\mu_x^p$ to $\mu_y^p$ such that $$\pi(x,y)=p-\frac{1-p}{d_y}.$$ 
\end{lemma}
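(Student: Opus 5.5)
The plan is to start from an optimal plan that is already simple, and then push as much of $x$'s leftover mass as possible onto $y$ by an exchange argument, using the degree assumption to rule out the one bad exchange. Write $\mu_1=\mu_x^p$ and $\mu_2=\mu_y^p$. Since $p\ge \frac{1}{1+d_y}$, we have $\mu_2(x)=\frac{1-p}{d_y}\le p=\mu_1(x)$, and also $\mu_1(y)=\frac{1-p}{d_x}\le \frac{1-p}{d_y}\le p$. So in any simple plan
\[
\pi(x,x)=\frac{1-p}{d_y},\qquad \pi(y,y)=\frac{1-p}{d_x},\qquad \pi(a,a)=\frac{1-p}{d_x}\ \text{ for } a\in A_{xy},
\]
where the last value uses $d_x\ge d_y$. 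Hence $x$ has exactly $p-\frac{1-p}{d_y}$ left to ship. The vertices $y$ and $a\in A_{xy}$ ship nothing, since all their $\mu_1$-mass stays put. The only other senders are the vertices of $N_x$.

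\textbf{Step 1: a simple optimal plan exists.} Suppose $\pi$ is optimal and $\pi(v,v)<\min\{\mu_1(v),\mu_2(v)\}$ for some $v$. Then there are $a\ne v$ with $\pi(v,a)>0$ and $b\ne v$ with $\pi(b,v)>0$. Move $\varepsilon$ of mass from $(v,a),(b,v)$ to $(v,v),(b,a)$. The cost changes by $\varepsilon\bigl(\rho(b,a)-\rho(b,v)-\rho(v,a)\bigr)\le 0$ by the triangle inequality, so optimality is kept. Repeating this, or equivalently choosing an optimal plan maximizing $\sum_v\pi(v,v)$ over the compact set of optimal plans, gives a simple optimal plan.

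\textbf{Step 2: maximize $\pi(x,y)$.} Among simple optimal plans, a compact set, choose $\pi$ maximizing $\pi(x,y)$. Suppose $\pi(x,y)<p-\frac{1-p}{d_y}$. Then $x$ sends positive mass to some $z\in\operatorname{supp}\mu_2\setminus\{x,y\}=N(y)\setminus\{x\}$, so $z\in A_{xy}\cup N_y$. Meanwhile $y$'s incoming mass is $\mu_2(y)=p> \pi(x,y)+\pi(y,y)$ (as $\frac{1-p}{d_x}\le\frac{1-p}{d_y}$). Hence some $u\notin\{x,y\}$ has $\pi(u,y)>0$. By the remark above, $u\in N_x$, so $\rho(u,y)=2$. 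Move $\varepsilon$ of mass from $(x,z),(u,y)$ to $(x,y),(u,z)$. Diagonal entries are untouched because $u\ne z$ (as $N_x\cap N(y)=\emptyset$), so the plan stays simple. The cost change is
\[
\varepsilon\bigl(1+\rho(u,z)-\rho(x,z)-2\bigr).
\]
If $z\in N_y$, then $\rho(x,z)=2$ and $\rho(u,z)\le\rho(u,x)+\rho(x,y)+\rho(y,z)=3$, so the change is $\le 0$. If $z\in A_{xy}$, then $\rho(x,z)=1$ and $\rho(u,z)\le 2$ since $u,z\in N(x)$, so again the change is $\le 0$. Either way we obtain a simple optimal plan with a larger $\pi(x,y)$, a contradiction. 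Since $\pi(x,y)\le\mu_1(x)-\pi(x,x)=p-\frac{1-p}{d_y}$ always holds, equality follows.

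The main obstacle is the case $z\in A_{xy}$ in Step 2. There a naive swap can increase the cost if the donor $u$ were itself a common neighbour. The point that rescues this is simplicity together with $d_x\ge d_y$: vertices of $A_{xy}$, and $y$ itself, keep all of their $\mu_1$-mass, so the donor $u$ must lie in $N_x$, at distance $2$ from $y$.
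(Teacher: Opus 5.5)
Your proof is correct. The paper gives no proof of this statement, since it is quoted from \cite[Lemma 4.1]{CLY}, so there is no in-text argument to compare against. Your argument nonetheless uses the paper's own toolkit: a compactness argument selects an extremal plan, as in Lemma~\ref{maximizer}, and a cost-non-increasing exchange finishes, as in Lemmas~\ref{E(V_1,V_2)emptyset} and~\ref{E(V_1,V_5)=empty}.

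The key steps all check out:
\begin{enumerate}[(i)]
\item For $p\ge\frac{1}{1+d_y}$ and $d_x\ge d_y$, simplicity forces $\pi(x,x)=\frac{1-p}{d_y}$ and $\pi(v,v)=\mu_x^p(v)$ for $v\in\{y\}\cup A_{xy}$.
\item Hence any $u\notin\{x,y\}$ with $\pi(u,y)>0$ lies in $N_x$, so $\rho(u,y)=2$.
\item Since $u\neq z$, the exchange leaves the diagonal untouched and the plan stays simple.
\item Both cases $z\in N_y$ and $z\in A_{xy}$ give a non-positive cost change.
\end{enumerate}

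One thing to tidy is Step 1. Rely on maximizing $\sum_v\pi(v,v)$ over the compact set of optimal plans; at a maximizer, your swap would raise this sum by at least $\varepsilon$, which is a contradiction. Do not rely on ``repeating'' the swap, because you have not argued that the repetition terminates.
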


Based on the above Lemma, we derive the following result.




\begin{lemma}\label{maximizer}
    Let $G$ be a graph. 
    Let $x$ and $y$ be two adjacent vertices in $G$ with $d_x\ge d_y$. 
    For any $p\in \left[ \frac{1}{1+d_y},1 \right]$, there exists a simple optimal transport plan from $\mu_x^p$ to $\mu_y^p$ such that $\pi(x,y)=p-\frac{1-p}{d_y}$ and that, among all such plans, maximizes $\sum_{\rho(u,v)=1}\pi(u,v)$.
\end{lemma}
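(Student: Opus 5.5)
The plan is a compactness argument. Fix $p\in\left[\frac{1}{1+d_y},1\right]$ and let $\mathcal{P}$ be the set of simple optimal transport plans $\pi$ from $\mu_x^p$ to $\mu_y^p$ with $\pi(x,y)=p-\frac{1-p}{d_y}$. By Lemma~\ref{simple}, $\mathcal{P}$ is nonempty. We will show that $\mathcal{P}$ is a compact subset of a finite-dimensional Euclidean space and that $\pi\mapsto\sum_{\rho(u,v)=1}\pi(u,v)$ is continuous on it. A maximizer then exists by the extreme value theorem.

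The first step is to reduce to finitely many coordinates. Any transport plan from $\mu_x^p$ to $\mu_y^p$ vanishes outside $\mathrm{supp}(\mu_x^p)\times \mathrm{supp}(\mu_y^p)$, since $\pi\ge 0$ and its marginals are these measures. Both supports, $\{x\}\cup N(x)$ and $\{y\}\cup N(y)$, are finite because $G$ is locally finite. So every plan can be identified with a vector in $[0,1]^{S}$, where $S=(\{x\}\cup N(x))\times(\{y\}\cup N(y))$ is finite. On this set the cost $\sum\rho(u,v)\pi(u,v)$ and the quantity $\sum_{\rho(u,v)=1}\pi(u,v)$ are finite linear functions, hence continuous.

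The second step is closedness. Each defining condition is a closed condition in $[0,1]^S$:
\begin{itemize}
\item the marginal constraints are linear equalities;
\item simplicity, $\pi(u,u)=\min\{\mu_x^p(u),\mu_y^p(u)\}$, is a finite set of equalities with fixed right-hand sides;
\item $\pi(x,y)=p-\frac{1-p}{d_y}$ is one linear equality;
\item optimality means $\sum\rho(u,v)\pi(u,v)=W(\mu_x^p,\mu_y^p)$, a linear equality with a fixed constant.
\end{itemize}
So $\mathcal{P}$ is an intersection of closed sets inside the compact cube $[0,1]^S$. In fact it is a nonempty polytope.

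Maximizing the continuous (indeed linear) function over $\mathcal{P}$ therefore attains its supremum at some $\pi^*\in\mathcal{P}$. This $\pi^*$ is the required plan. Alternatively, one can cite linear programming: a bounded linear program over a nonempty polytope attains its optimum.

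The only step needing care is the finite-dimensional reduction, since the paper defines transport plans on all of $V\times V$. If $V$ is infinite, we must note that the support restriction is automatic before invoking compactness. Beyond that, the argument is routine.
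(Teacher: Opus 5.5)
Your proposal is correct and follows the paper's argument: you restrict plans to the finite index set $(\{x\}\cup N(x))\times(\{y\}\cup N(y))$ and note that the marginal, simplicity, $\pi(x,y)$, and optimality constraints cut out a closed (indeed polyhedral) subset of a compact cube, nonempty by Lemma~\ref{simple}. You then maximize the continuous linear functional $\sum_{\rho(u,v)=1}\pi(u,v)$ over it; writing optimality directly as the linear equality $\sum\rho(u,v)\pi(u,v)=W(\mu_x^p,\mu_y^p)$ is a slightly more direct version of the paper's level-set argument, but the same in substance.
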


\begin{proof}
Let $ X=\operatorname{supp}\mu_x^p$ and  $Y=\operatorname{supp}\mu_y^p$.
Then $X=\{x\}\cup N(x)$ and $Y=\{y\}\cup N(y)$. 
Since $G$ is locally finite, both $X$ and $Y$ are finite. 
Hence every transport plan from $\mu_x^p$ to $\mu_y^p$ can be regarded as a matrix in $\mathbb{R}^{X\times Y}$.
The set of all transport plans is determined by finitely many linear equalities. 
Moreover, every entry of the corresponding matrix lies in $[0,1]$. 
Hence this set is compact.

Let $\Pi_{xy}^p$ be the set of all simple optimal transport plans from 
$\mu_x^p$ to $\mu_y^p$ such that $\pi(x,y)=p-\frac{1-p}{d_y}$.
By Lemma~\ref{simple}, the set $\Pi_{xy}^{p}$ is non-empty. 
The additional condition that the plan is optimal is closed. Indeed, the cost functional
$\mathcal C(\pi)
  =
  \sum_{u\in X}\sum_{v\in Y}\rho(u,v)\pi(u,v)
$
is continuous.
Since the set of all transport plans is compact, the minimum of
$\mathcal C$ is attained, and the set of optimal plans is the closed level set
where $\mathcal C$ equals this minimum.
Moreover, the condition 
$$
\pi(x,y)=p-\frac{1-p}{d_y}
$$
is closed. The simplicity condition is also closed, since it is given by the finite family of linear equalities
$\pi(u,u)=\min\{\mu_x^p(u),\mu_y^p(u)\}, \text{for all } u\in X\cap Y.$
Thus $\Pi_{xy}^{p}$ is a closed subset of a compact set, and hence is compact.

Now define
$$
\Phi(\pi)=\sum_{\rho(u,v)=1}\pi(u,v).
$$
Then $\Phi$ is a continuous function on $\Pi_{xy}^{p}$.
Therefore, by the Weierstrass extreme value theorem, $\Phi$ attains its maximum on $\Pi_{xy}^{p}$. 
Consequently, there exists some $\pi_0\in \Pi_{xy}^{p}$ such that
$$ \Phi(\pi_0)=\max_{\pi\in\Pi_{xy}^{p}}\Phi(\pi).$$
This proves the lemma.
\end{proof}


\section{A computational lemma}
In this section, we present a computational lemma which plays a key role in our proof.
\begin{lemma}\label{computation}
    Let \(d_x,d_y,N_x,N_y,a,n,V_1,V_2,V_3,V_4,V_5\) be non-negative integers and let $M$ be a non-negative real number. 
Suppose that $n\geq 8$, $d_x=1+a+N_x$, $d_y=1+a+N_y$, $N_x\geq N_y$, $N_x+N_y+a+2\leq n$, and $V_1+V_3+V_4=N_x$.
Suppose that, if $N_y=a=0$, then $V_4=0$.
Assume further that 
$$d_x-\delta d_y>0,\quad V_1\geq 1+a+\frac{d_x}{d_y},\quad V_2\geq \frac{d_y(a+2)}{d_x-\delta d_y}+M,\quad V_5\geq \frac{d_yV_4}{d_x-\delta d_y}-M,$$
where $\delta \coloneqq 0$ if $N_y\ge 1$, and $\delta \coloneqq 1$ if $N_y= 0$. Then, we have
    \begin{align}\label{inequality}
      \floor{\frac n2}-N_x-N_y-2a+V_2(V_1+V_3)+V_5V_1\geq 2.  
    \end{align}
    Moreover, for even $n$, equality in \eqref{inequality} holds if and only if $$V_1=\left\lceil\frac{n}{2} \right\rceil,a=0,V_2=N_y=1,V_4=\left\lfloor\frac{n}{2}\right\rfloor-3,\ \ {\rm and}\ \ V_3=V_5=0.$$
\end{lemma}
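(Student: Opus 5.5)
The plan is to eliminate $M$ first, then split into the two cases $\delta=0$ and $\delta=1$. In each case I will bound the quadratic part from below by a linear expression, so that the remaining claim follows from the trivial count $N_x+N_y+a+2\le n$.

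\emph{Step 1: removing $M$ and linearizing.} Write $D:=d_x-\delta d_y>0$. Since $M\ge 0$, the hypothesis on $V_2$ gives $V_2>0$, so $V_2\ge 1$ because $V_2$ is an integer. Adding the two hypotheses involving $M$ cancels it:
\[
V_2+V_5\ \ge\ \frac{d_y(a+2+V_4)}{D},
\]
and the left side is an integer, so we may replace the right side by its ceiling. Then
\[
V_2(V_1+V_3)+V_5V_1 \;=\; V_1(V_2+V_5)+V_2V_3 \;\ge\; V_1\left\lceil \frac{d_y(a+2+V_4)}{D}\right\rceil + V_3 .
\]
We substitute $N_x=V_1+V_3+V_4$, so the $V_3$ terms cancel. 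It then suffices to prove
\[
V_1\left\lceil \frac{d_y(a+2+V_4)}{D}\right\rceil \;\ge\; V_1+V_4+N_y+2a+2-\left\lfloor \frac n2\right\rfloor .
\]
We use $\lfloor n/2\rfloor\ge \lceil (N_x+N_y+a+2)/2\rceil$, where $N_x\ge V_1+V_4$.

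\emph{Step 2: the case $\delta=0$, i.e.\ $N_y\ge 1$.} Here $D=d_x$, and the hypothesis $V_1\ge 1+a+d_x/d_y$ gives
\[
V_1\,\frac{d_y}{d_x}\ \ge\ 1+(1+a)\frac{d_y}{d_x}.
\]
Hence
\[
V_1(V_2+V_5)\ \ge\ (a+2+V_4)\Bigl(1+(1+a)\frac{d_y}{d_x}\Bigr).
\]
After subtracting $a+2+V_4$, the remaining requirement is roughly
\[
(a+2+V_4)(1+a)\frac{d_y}{d_x}+\left\lfloor\frac n2\right\rfloor\ \ge\ V_1+N_y+a .
\]
I would handle it by combining $\lfloor n/2\rfloor\ge (V_1+V_4+N_y+a+2)/2$ with $V_1\ge 2+a$. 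I would split further according to whether the ceiling $\lceil d_y(a+2+V_4)/d_x\rceil$ equals $1$ or is at least $2$. If it is at least $2$, the product $2V_1$ absorbs $V_1$ plus the leftover half of the count. If it equals $1$, then $d_y(a+2+V_4)\le d_x$; since $d_y\ge 2+a$, this forces $N_x$ to be large compared with $V_4$, $a$ and $N_y$. The inequality then follows from $n\ge N_x+N_y+a+2$.

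\emph{Step 3: the case $\delta=1$, i.e.\ $N_y=0$.} Here $d_y=1+a$, $D=N_x$, and $V_1\ge 1+a+(1+a+N_x)/(1+a)$. If $a=0$ then $V_4=0$ by hypothesis, and the inequality reduces to
\[
V_1\lceil 2/N_x\rceil+\left\lfloor \frac n2\right\rfloor\ge V_1+2 .
\]
This holds because $n\ge 8$ gives $\lfloor n/2\rfloor\ge 4$. If $a\ge 1$, the bound on $V_1$ grows with $N_x/(1+a)$, and the same two-subcase argument as in Step 2 applies.

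\emph{Step 4: equality for even $n$.} In each case I would trace which inequalities must be tight: $V_2V_3=V_3$, the ceiling bound, $N_x=V_1+V_4$ (so $V_3=0$), and $n=N_x+N_y+a+2$. Requiring all of these simultaneously should force $a=0$, $N_y=V_2=1$, $V_5=0$, and then $V_1=n/2$ and $V_4=n/2-3$.

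The main obstacle will be Step 2 when the ceiling equals $1$ and $V_1$ is close to $n/2$. In that regime the quadratic term gives essentially nothing beyond $V_1$, and all the slack has to come from the parity and rounding of $\lfloor n/2\rfloor$. This is also exactly where the equality configuration lives. I would first try to prove the sharp integer bound there directly, treating $a$, $N_y$ and $V_4$ as the only free parameters.
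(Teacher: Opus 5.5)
Your Step 1 is the paper's reduction. With $q=\lceil d_y(a+2+V_4)/(d_x-\delta d_y)\rceil$ and $V_2\ge 1$, the claim becomes $\lfloor n/2\rfloor-N_y-2a-V_4+(q-1)V_1\ge 2$. Your handling of $a=N_y=0$ and of the subcase $q=1$ also matches the paper: there $d_y(a+2+V_4+\delta)\le d_x$ forces $N_x$, hence $n$, to be large. The split by $\delta$ is not needed.

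\textbf{The gap is the subcase $q\ge 2$.} You propose to use only $(q-1)V_1\ge V_1$ together with the vertex count, and this fails when $V_4$ is large. Take $a=6$, $N_y=1$, $N_x=41$, $n=50$, $V_1=13$, $V_3=0$, $V_4=28$, with say $V_2=2$, $V_5=5$, $M=0$. This meets every hypothesis ($d_x=48$, $d_y=8$, $1+a+d_x/d_y=13$). Yet $V_4+N_y+2a+2-\lfloor n/2\rfloor=18>13=V_1$, so your absorption step fails; the inequality survives only because $q=6$ here.

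The missing idea is to feed the definition of $q$ back in. The bound $d_y(a+2+V_4)\le q(d_x-\delta d_y)$ gives $V_4\le q\theta-a-2$ with $\theta=d_x/d_y$. Combined with $V_1\ge 1+a+\lceil\theta\rceil$, this yields
\[
(q-1)V_1-V_4\ \ge\ (q-1)(1+a)+(q-1)\bigl(\lceil\theta\rceil-\theta\bigr)+a+2-\theta\ \ge\ 2a+3-\theta .
\]
Next, $\theta=1+\frac{N_x-N_y}{1+a+N_y}\le 1+\frac{N_x-N_y}{2}$; this is where $a+N_y\ge 1$ enters. Together with $\lfloor n/2\rfloor\ge\lfloor (N_x+N_y+a+2)/2\rfloor$, the reduced left-hand side is at least $(a+5)/2>2$. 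Your Step 4 needs this strictness: it excludes equality for $q\ge 2$, so the equality analysis only has to trace the $q=1$ chain.

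Two smaller errors:
\begin{enumerate}[(i)]
\item Your bound $\lfloor n/2\rfloor\ge\lceil (N_x+N_y+a+2)/2\rceil$ is false when $n=N_x+N_y+a+2$ is odd, and the lost unit matters. For odd $n\ge 9$, take $a=V_2=1$, $N_y=V_3=V_5=0$, $V_1=\frac{n+3}{2}$, $V_4=\frac{n-9}{2}$, $M=\frac{n-9}{n-3}$. This choice is admissible and gives left-hand side exactly $2$, so an argument using the ceiling would prove a false bound.
\item The ``remaining requirement'' in Step 2, obtained from the real-valued lower bound on $V_1(V_2+V_5)$, is itself false. For $a=0$, $N_y=1$, $V_1=N_x=10$, $V_3=V_4=0$, $n=13$, its left side is about $6.4$ and its right side is $11$. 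So integrality through the ceiling is indispensable, and the delicate subcase is $q\ge 2$, not $q=1$ as you expected.
\end{enumerate}
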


\begin{proof}
For simplicity, set $p\coloneqq \ceil{\frac{d_y(a+2)}{d_x-\delta d_y}}$, and $q\coloneqq \ceil{\frac{d_y(a+2+V_4)}{d_x-\delta d_y}}$.
Since $V_2$ and $V_5$ are integers and \(M\geq 0\), we find $V_2\geq p$ and $V_2+V_5\geq q$.
Therefore,
\begin{align}\notag
V_2(V_1+V_3)+V_5V_1&= qV_1+pV_3+(V_2-p\bigr)V_3+(V_2+V_5-q)V_1 \\ \label{v2v1v3}
&\geq qV_1+pV_3.
\end{align} 
Note that $q\ge p\geq 1$. So, $pV_3 \ge V_3= N_x-V_1-V_4$.
It follows from \eqref{v2v1v3} that
\begin{align*}
\floor{\frac n2}-N_x-N_y-2a+V_2(V_1+V_3)+V_5V_1&\ge
\floor{\frac n2}-N_x-N_y-2a+qV_1+N_x-V_1-V_4 \\
&=\floor{\frac n2}-N_y-2a-V_4+(q-1)V_1.
\end{align*}
Now, it suffices to show that $\floor{\frac n2}-N_y-2a-V_4+(q-1)V_1\geq 2$.

Recall that $n\ge 8$ and $q\geq 1$. 
By the assumption, if $N_y=a=0$, then $V_4=0$, and hence
\begin{equation}\label{aNy0}
    \floor{\frac n2}-N_y-2a-V_4+(q-1)V_1> 2.
\end{equation}
Thus, we may assume that $a+N_y>0$.

We split the proof into two cases according to whether $q=1$.

\medskip

\noindent
\textbf{Case 1: \(q=1\).}

We have $\ceil{\frac{d_y(a+2+V_4)}{d_x-\delta d_y}}=1$, and hence $d_y(a+2+V_4+\delta)\leq d_x$.
Since \(d_x=1+a+N_x\) and \(d_y=1+a+N_y\), we obtain $(1+a+N_y)(a+2+V_4+\delta)\leq 1+a+N_x$.
That is, 
\begin{equation}\label{NxaNy}
    N_x\geq(1+a+N_y)(a+2+V_4+\delta)-a-1.
\end{equation}
Recall that $a+N_y>0$. So,
\begin{align}\label{q=1}
&(1+a+N_y)(a+2+V_4+\delta)-a-1-\bigl(N_y+3a+2V_4+2\bigr)\nonumber\\
=&(a+N_y-1)V_4+N_y\delta +a^2+(\delta+N_y-1)(a+1)\ge 0.
\end{align}
Therefore, $N_x\geq N_y+3a+2V_4+2$.
Together with the assumption $N_x+N_y+a+2\leq n$, we get
$n\geq 2N_y+4a+2V_4+4$, and hence
$\floor{\frac n2}\geq N_y+2a+V_4+2.$
Since \(q=1\), we conclude that
\begin{align}\label{q=1,last}
    \floor{\frac {n}{2}}-N_y-2a-V_4+(q-1)V_1
=\floor{\frac{n}{2}}-N_y-2a-V_4\geq 2.
\end{align}

\medskip

\noindent
\textbf{Case 2: $q\geq 2$.}

Since $V_1\geq 1+a+\frac{d_x}{d_y}$ and $V_1$ is an integer, we have $V_1\geq 1+a+\ceil{\frac{d_x}{d_y}}$.
Let $\theta\coloneqq \frac{d_x}{d_y}$.
Then $V_1\geq 1+a+\ceil{\theta}$.
Since $n\geq N_x+N_y+a+2$, we have
$$\floor{\frac {n}{2}}-N_y-2a\geq \floor{\frac{N_x-N_y-3a+2}{2}}.$$
Therefore,
\begin{align*}
\floor{\frac{n}{2}}-N_y-2a-V_4+(q-1)V_1 
\ge\floor{\frac{N_x-N_y-3a+2}{2}}-V_4+(q-1)(1+a+\ceil{\theta})
\end{align*}
By the definition of $q$, we have $\frac{d_y(a+2+V_4)}{d_x-\delta d_y}\leq q$, which implies $V_4\leq q\theta-a-2$. So,
\begin{align*}
&\floor{\frac{n}{2}}-N_y-2a-V_4+(q-1)V_1 \\
\geq&\floor{\frac{N_x-N_y-3a+2}{2}}-\left(q\theta-a-2\right)+(q-1)(1+a+\ceil{\theta}) \\
=&\floor{\frac{N_x-N_y+a+8}{2}}+(q-2)(1+a)+(q-1)(\ceil{\theta}-\theta)-\theta.
\end{align*}
By the assumption that $q\ge 2$ and the fact that $\ceil{\theta}\ge\theta$, we arrive at
\begin{align*}
\floor{\frac{n}{2}}-N_y-2a-V_4+(q-1)V_1 
&\geq\floor{\frac{N_x-N_y+a+8}{2}}-\theta\\
&> \frac{N_x-N_y+a+6}{2}-\theta.
\end{align*}
By the assumptions \(d_x=1+a+N_x\), \(d_y=1+a+N_y\), $N_x\ge N_y$, and $a+N_y>0$, we have
\begin{align*}
\theta=\frac{d_x}{d_y}=1+\frac{N_x-N_y}{1+a+N_y}\le 
1+\frac{N_x-N_y}{2}.
\end{align*}
Thus,
\begin{align*}
\floor{\frac{n}{2}}-N_y-2a-V_4+(q-1)V_1 
&>
\frac{N_x-N_y+a+6}{2}-1-\frac{N_x-N_y}{2}\ge 2.
\end{align*}

\noindent\textbf{Characterization of the equality for even $n$}

Recall from \eqref{aNy0} that $a+N_y> 0$.
Note that the equality in~\eqref{inequality} cannot hold in Case 2. So, $q=1$, and hence the equality in~\eqref{q=1} holds, which implies $a=0$, $N_y=1$, and $\delta =0$.
Moreover, the equality in~\eqref{q=1,last} should also hold.
So, $V_4=\floor{\frac{n}{2}}-3$.
Note that the equality in \eqref{NxaNy} gives $N_x=n-3$.
Thus, $V_1\geq 1+a+\frac{d_x}{d_y}=\frac{n}{2}$.
Then, $V_1+V_3+V_4=N_x$ implies $V_1=\frac{n}{2}$ and $V_3=0$.
The equality in~\eqref{v2v1v3} gives $V_2=p$ and $V_2+V_5= 1$.
Recall that $q\ge p\ge 1$. So $p=1$, yielding $V_2=1$ and $V_5=0$.
\end{proof}

\section{\texorpdfstring{Proof of the main theorem}{Proof of the main theorem}}

In this section, we prove Theorem~\ref{main}.
Let $n\ge 8$ and let $G$ be a graph of order $n$. 
Suppose that there are at most $n+\left\lceil \frac{n}{2} \right\rceil -3+\epsilon_n$ edges in the complement graph $\overline{G}$ of $G$, where $\epsilon_n =1$ if $n$ is even and $n\ge 12$, and $\epsilon_n =0$ otherwise.
Assume that there is an edge $xy$ in $G$ such that the Lin--Lu--Yau curvature $\kappa_{LLY}(x,y)$ is non-positive. 
We will prove that this forces $\epsilon_n =1$ and that $G$ is uniquely determined.

 Without loss of generality, assume that $d_x\ge d_y$.
  For any fixed $p\in \left[ \frac{1}{1+d_y},1 \right)$,  by Lemma~\ref{maximizer}, there exists a simple optimal transport plan from $\mu_x^p$ to $\mu_y^p$ such that $\pi(x,y)=p-\frac{1-p}{d_y}$ and that, among all such plans, maximizes $\sum_{\rho(u,v)=1}\pi(u,v)$.
A direct observation is that, for any two vertices $u,v$ with $\pi(u,v)>0$, we have either $u=x$ and $v=y$, or $u\in N_x$ and $v\in N_y\cup A_{xy}$.
 Set $$V_1\coloneqq \{ w\in N_x|\ \exists z,\ \pi(w,z)>0,\ \rho(w,z)\geq 2 \}$$ and $$V_2\coloneqq \{ w\in A_{xy} \cup N_y|\ \exists z,\ \pi(z,w)>0,\ \rho(z,w)\geq 2 \}.$$
 
\begin{lemma}\label{E(V_1,V_2)emptyset}
    $E(V_1,V_2)=\emptyset$.
\end{lemma}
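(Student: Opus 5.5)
We argue by contradiction, using the extremality of $\pi$ from Lemma~\ref{maximizer}. Suppose there are $w_1\in V_1$ and $w_2\in V_2$ with $w_1w_2\in E$. By definition of $V_1$ there is some $z_1$ with $\pi(w_1,z_1)>0$ and $\rho(w_1,z_1)\ge 2$. By definition of $V_2$ there is some $z_2$ with $\pi(z_2,w_2)>0$ and $\rho(z_2,w_2)\ge 2$. By the direct observation preceding the lemma, $w_1,z_2\in N_x$ and $z_1,w_2\in N_y\cup A_{xy}$; in particular none of these entries involves the pair $(x,y)$. Also, every vertex of $N_x\cup N_y\cup A_{xy}$ is within distance $3$ of every other, via $x$ and $y$. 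So every distance among $w_1,z_1,z_2,w_2$ is at most $3$, and in fact at most $2$ whenever both endpoints lie in $N(x)$ or both lie in $N(y)$.

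We perform a swap. Set $\varepsilon=\min\{\pi(w_1,z_1),\pi(z_2,w_2)\}>0$ and define
\[
\pi'=\pi-\varepsilon\,\delta_{w_1z_1}-\varepsilon\,\delta_{z_2w_2}+\varepsilon\,\delta_{w_1w_2}+\varepsilon\,\delta_{z_2z_1}.
\]
\emph{Admissibility.} Row and column sums are unchanged, so $\pi'$ is a transport plan with the same marginals. The diagonal entries are unchanged unless $z_2=z_1$, which is impossible because $z_2\in N_x$ while $z_1\in N_y\cup A_{xy}$, and these sets are disjoint. The entry $\pi(x,y)$ is untouched, and all entries stay non-negative. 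Hence $\pi'$ is still simple with $\pi'(x,y)=p-\frac{1-p}{d_y}$.

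\emph{Cost comparison.} The cost changes by
\[
\varepsilon\bigl(\rho(w_1,w_2)+\rho(z_2,z_1)-\rho(w_1,z_1)-\rho(z_2,w_2)\bigr).
\]
Here $\rho(w_1,w_2)=1$. The vertices $z_2\in N_x$ and $z_1\in N(y)$ are joined by the path $z_2,x,y,z_1$, so $\rho(z_2,z_1)\le 3$. The two subtracted terms are each at least $2$. Thus the change is at most $\varepsilon(1+3-2-2)=0$.

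Since $\pi$ is optimal, the change must equal $0$, so $\pi'$ is also optimal and therefore $\pi'\in\Pi^p_{xy}$. The new plan gains the distance-one entry $(w_1,w_2)$ with mass $\varepsilon$. The removed entries $(w_1,z_1)$ and $(z_2,w_2)$ were at distance $\ge 2$, so removing them does not decrease $\sum_{\rho=1}\pi$. The added entry $(z_2,z_1)$ can only add more. Hence $\Phi(\pi')\ge\Phi(\pi)+\varepsilon>\Phi(\pi)$, contradicting the maximality of $\Phi(\pi)$ in Lemma~\ref{maximizer}. Therefore $E(V_1,V_2)=\emptyset$.

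The main point to check carefully is the cost estimate, namely that $\rho(z_2,z_1)\le 3$ always holds. This is guaranteed because $z_2$ is adjacent to $x$ and $z_1$ is adjacent to $y$. The swap can fail to strictly decrease the cost, but that is exactly why the lemma is phrased through the secondary maximization of $\Phi$ rather than through optimality alone.
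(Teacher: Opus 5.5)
Your proof is correct and is essentially the paper's argument: the same exchange moving mass $\varepsilon$ onto the edge $(w_1,w_2)$ and the pair $(z_2,z_1)$, the same cost bound $1+3-2-2=0$ showing $\pi'$ is still optimal, and the same contradiction with the maximality of $\sum_{\rho(u,v)=1}\pi(u,v)$ from Lemma~\ref{maximizer}. One small point, which the paper shares: the witness $z_1$ may be $y$ itself, since $N_x$ can send mass to $y$ at distance $2$ and the quoted ``direct observation'' omits this case; your argument still goes through unchanged there, because $z_2\neq y$ and $\rho(z_2,y)=2\le 3$.
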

\begin{proof}
    For a contradiction, assume that there exist $u_1\in V_1$ and $v_1\in V_2$ such that $u_1v_1\in E(V_1,V_2)$.
    By the definition of $V_1$, there exists a vertex $v_2\in V_2$ such that $\pi(u_1,v_2)>0$ and $\rho(u_1,v_2)\geq 2$.
     By the definition of $V_2$, there exists a vertex $u_2\in V_1$ such that $\pi(u_2,v_1)>0$ and $\rho(u_2,v_1)\geq 2$.
Set $\pi'\coloneqq\pi-\varepsilon(\delta_{u_1v_2}+\delta_{u_2v_1}-\delta_{u_1v_1}-\delta_{u_2v_2})$, where $\varepsilon=\min \{ \pi(u_1,v_2), \pi(u_2,v_1) \}$.
Then $\pi'$ is a simple  transport plan from $\mu_x^p$ to $\mu_y^p$.
Moreover,
\begin{align*}
    \sum\limits_{u,v\in V}\pi(u,v)\rho(u,v)- \sum\limits_{u,v\in V}\pi'(u,v)\rho(u,v)
    &= \varepsilon (\rho(u_1,v_2)+\rho(u_2,v_1)-\rho(u_1,v_1)-\rho(u_2,v_2))
    \\ 
    &\geq \varepsilon(2+2-1-3)=0.
\end{align*}
Since $\pi$ is an optimal transport plan, $\pi'$ is also optimal. 
However, since $\rho(u_1,v_1)= 1$, $\rho(u_1,v_2)\geq 2$, and $\rho(u_2,v_1)\geq 2$, we find
$$\sum_{\rho(u,v)=1}\pi(u,v) -  \sum_{\rho(u,v)=1}\pi'(u,v)\le -\varepsilon< 0,$$
which contradicts the choice of $\pi$. 
\end{proof}

\begin{lemma}\label{four different vertices}
    There do not exist four distinct vertices $u_1,v_1,u_2,v_2\in V(G)$ such that $\rho(u_1,v_1)\geq 3$ and $\rho(u_2,v_2)\geq 3$. 
\end{lemma}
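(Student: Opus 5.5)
Throughout, we are in the standing setting of this section: $G$ has order $n\ge 8$, and $\overline{G}$ has at most $n+\lceil n/2\rceil-3+\epsilon_n$ edges. The plan is to count non-edges. Two vertices at distance at least $3$ force many edges of $\overline{G}$, and two disjoint such pairs force too many.

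First I will record what $\rho(u,v)\ge 3$ means. For distinct $u,v$, the inequality $\rho(u,v)\ge 3$ says that $uv\notin E(G)$ and $N(u)\cap N(v)=\emptyset$. So every vertex $w\notin\{u,v\}$ is non-adjacent in $G$ to at least one of $u,v$. Hence $\overline{G}$ contains the edge $uv$ together with at least $n-2$ further edges incident to $\{u,v\}$, all pairwise distinct. That gives at least $n-1$ edges of $\overline{G}$ incident to $\{u,v\}$.

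Next, suppose $u_1,v_1,u_2,v_2$ are distinct with $\rho(u_1,v_1)\ge 3$ and $\rho(u_2,v_2)\ge 3$. I will count edges of $\overline{G}$ meeting $S_1=\{u_1,v_1\}$ or $S_2=\{u_2,v_2\}$.
\begin{itemize}
\item Each $w\notin S_1\cup S_2$ gives one non-edge to $S_1$ and one to $S_2$, and these are distinct. This yields $2(n-4)$ edges.
\item Add the two edges $u_1v_1$ and $u_2v_2$.
\item Consider the edges between $S_1$ and $S_2$. The vertex $u_2$ must be non-adjacent to $u_1$ or to $v_1$, and likewise $v_2$. This gives at least two distinct cross non-edges, since they have different endpoints in $S_2$.
\end{itemize}
In total $\overline{G}$ has at least $2(n-4)+2+2=2n-4$ edges.

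Finally I compare this with the assumption. We need $2n-4>n+\lceil n/2\rceil-3+\epsilon_n$, i.e.\ $n-1>\lceil n/2\rceil+\epsilon_n$.
\begin{itemize}
\item For odd $n\ge 9$: $\lceil n/2\rceil=(n+1)/2$, and $n-1>(n+1)/2$ holds since $n>3$.
\item For $n=8,10$ ($\epsilon_n=0$): $n-1>n/2$ holds.
\item For even $n\ge 12$: we need $n-1>n/2+1$, i.e.\ $n>4$, which holds.
\end{itemize}
This contradiction proves the lemma.

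The only delicate step is making sure the cross-count is not double-counted. The cross edges run between $S_1$ and $S_2$, so they differ from the edges to $w\notin S_1\cup S_2$ and from $u_iv_i$. The inequality has plenty of slack, so no refinement is needed.
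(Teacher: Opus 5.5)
Your proof is correct and follows the paper's argument: count the non-edges forced by the two disjoint pairs at distance at least $3$, and compare with the bound $n+\lceil n/2\rceil-3+\epsilon_n$. Your extra two cross non-edges between $\{u_1,v_1\}$ and $\{u_2,v_2\}$ are valid but not needed. The paper stops at $2(n-4)+2$, since $n-3>\lceil n/2\rceil+\epsilon_n$ already holds for every $n\ge 8$.
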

\begin{proof}
    Suppose to the contrary that there exist four distinct vertices $u_1,v_1,u_2,v_2\in V(G)$ such that $\rho(u_1,v_1)\geq 3$ and $\rho(u_2,v_2)\geq 3$.
    It follows that the remaining $n-4$ vertices cannot be adjacent to both $u_1$ and $v_1$, nor to both $u_2$ and $v_2$.
    Together with $\rho(u_1,v_1)\geq 3$ and $\rho(u_2,v_2)\geq 3$, this gives at least $2(n-4)+2$ edges in $\overline{G}$.
    When $n\geq 8$, $2(n-4)+2 > n+\left\lceil \frac{n}{2} \right\rceil-3+\epsilon_n$, which yields a contradiction.
\end{proof}
    For convenience, we denote by $a:=|A_{xy}|$ the size of $A_{xy}$. 
\begin{lemma}\label{edge-in-complement}
    There are exactly $N_x+N_y+2(n-2-a-N_x-N_y)$ edges incident to $x$ or $y$ in $\overline{G}$.
\end{lemma}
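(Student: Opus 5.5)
This is a direct count of the non-edges at $x$ and $y$, obtained by partitioning the other vertices. Every vertex $w\notin\{x,y\}$ lies in exactly one of four sets: $A_{xy}$, $N_x$, $N_y$, or $R:=V\setminus(\{x,y\}\cup N(x)\cup N(y))$. These sets are pairwise disjoint by definition, since $N_x=N(x)\setminus(A_{xy}\cup\{y\})$ and $N_y=N(y)\setminus(A_{xy}\cup\{x\})$. Hence
\[
|R|=n-2-a-N_x-N_y,
\]
where $N_x$ and $N_y$ also denote the cardinalities of these sets.

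Next we record the edges of $\overline{G}$ that are incident to $x$ or $y$. The pair $xy$ itself is not an edge of $\overline{G}$, because $xy\in E(G)$. So every such edge joins $x$ or $y$ to some $w$ in one of the four classes:
\begin{itemize}
\item If $w\in A_{xy}$, then $w$ is adjacent to both $x$ and $y$ in $G$, so it contributes no edge of $\overline{G}$.
\item If $w\in N_x$, then $w$ is adjacent to $x$ but not to $y$ in $G$, so it contributes exactly one edge, namely $wy\in E(\overline{G})$.
\item If $w\in N_y$, the same argument with $x$ and $y$ swapped gives exactly one edge, $wx$.
\item If $w\in R$, then $w$ is adjacent to neither $x$ nor $y$, so both $wx$ and $wy$ are edges of $\overline{G}$.
\end{itemize}

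Summing over the classes gives $0\cdot a+N_x+N_y+2|R|$ edges. Substituting the value of $|R|$ yields $N_x+N_y+2(n-2-a-N_x-N_y)$, which is the claimed count.

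The only point that needs care is the disjointness and exhaustiveness of the partition, together with the exclusion of the pair $xy$. Both follow immediately from the definitions, so there is no real obstacle. The lemma is bookkeeping that will later be combined with the forbidden adjacencies from Lemma~\ref{E(V_1,V_2)emptyset} to count edges of $\overline{G}$.
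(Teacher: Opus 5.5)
Your proof is correct and is essentially the paper's argument: the paper also simply observes that each vertex of $N_y$ gives one edge of $\overline{G}$ to $x$, each vertex of $N_x$ gives one edge to $y$, and each of the remaining $n-2-a-N_x-N_y$ vertices gives edges to both. You additionally make explicit that the four classes partition $V\setminus\{x,y\}$, that $A_{xy}$ contributes no edges, and that $xy\notin E(\overline{G})$; the paper leaves these checks implicit.
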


\begin{proof}
    We know that the vertices in $N_y$ are adjacent to $x$ in $\overline{G}$, and those in $N_x$ are adjacent to $y$.
    There are $n-2-a-N_x-N_y$ vertices that are adjacent to both $x$ and $y$ in $\overline{G}$.
    The result follows directly.
\end{proof}

We calculate the value of $W(\mu_x^p,\mu_y^p)-1$ as follows.
    \begin{align}\notag
        W(\mu_x^p,\mu_y^p)-1 & =\sum\limits_{u,v\in V} \pi(u,v)(\rho(u,v)-1)\\ 
        & = -\sum\limits_{\rho(u,v)=0}  \pi(u,v) +\sum\limits_{\rho(u,v)\geq 2}  \pi(u,v) (\rho(u,v)-1)      
        \label{**}.
    \end{align}
    Since $\pi$ is simple and $p\in \left[ \frac{1}{1+d_y},1 \right)$, we have
    \begin{align}\label{PIA}
        \sum\limits_{\rho(u,v)=0}  \pi(u,v) =a\cdot\frac{1-p}{d_x}+\frac{1-p}{d_x}+\frac{1-p}{d_y}
    \end{align}
    By the definition of $V_1$, 
    \begin{align*}
        \sum\limits_{\rho(u,v)\geq 2}  \pi(u,v) \rho(u,v)\le |V_1|\cdot\frac{1-p}{d_x}.
    \end{align*}
    Substituting the above inequality and equation \eqref{PIA} into equation \eqref{**} yields
    \begin{align}
        W(\mu_x^p,\mu_y^p)-1 
        \label{***}
        \le -\left(a\cdot\frac{1-p}{d_x}+\frac{1-p}{d_x}+\frac{1-p}{d_y}\right)+ |V_1|\cdot\frac{1-p}{d_x} + \sum\limits_{\rho(u,v)=3}  \pi(u,v).
    \end{align}

\begin{lemma}\label{d(u,v)<3}
For any two vertices $u,v\in V$ with $\pi(u,v)>0$, we have $\rho(u,v)\leq 2$.
\end{lemma}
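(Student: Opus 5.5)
The plan is to show that every pair in the support of $\pi$ other than $(x,y)$ is at distance at most $2$, except possibly pairs $(u,v)$ with $u\in N_x$, $v\in N_y$ and $\rho(u,v)=3$, and then to rule those out by contradiction. By the direct observation above, every pair $(u,v)$ with $\pi(u,v)>0$ is either $(x,y)$, at distance $1$, or has $u\in N_x$ and $v\in A_{xy}\cup N_y$. If $v\in A_{xy}$, the path $u\,x\,v$ gives $\rho(u,v)\le 2$. If $v\in N_y$, the path $u\,x\,y\,v$ gives $\rho(u,v)\le 3$. So it remains to exclude the case $\rho(u,v)=3$ with $u\in N_x$, $v\in N_y$ and $\pi(u,v)>0$; assume such a pair exists.

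First, I would restrict the structure of all distance-$3$ pairs in the support. By Lemma~\ref{four different vertices}, any two such pairs share a vertex. Every such pair consists of a vertex of $N_x$ and a vertex of $N_y$, and these sets are disjoint, so the pairs cannot form a triangle. Hence all of them share one common vertex $w$, and therefore
\[
\sum_{\rho(u,v)=3}\pi(u,v)\le \max\Bigl\{\frac{1-p}{d_x},\frac{1-p}{d_y}\Bigr\}=\frac{1-p}{d_y}.
\]
Inserting this into \eqref{***} and using $W(\mu_x^p,\mu_y^p)\ge 1$ (non-positive curvature, via \eqref{eq:bourne}) gives $|V_1|\ge 1+a$ up to the ratio $d_x/d_y$ terms. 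This is a lower bound on $|V_1|$ of the same shape as the hypothesis of Lemma~\ref{computation}, but weakened by roughly one unit because of the distance-$3$ mass. The companion bound on $|V_2|$ should follow in the same way from the target side of the transport.

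Second, I would count edges of $\overline{G}$. Since $\rho(u,v)=3$, no vertex is adjacent to both $u$ and $v$. So each of the $n-2$ vertices other than $u,v$ sends a complement edge to $u$ or to $v$, and together with $uv$ itself this gives $n-1$ complement edges. By Lemma~\ref{edge-in-complement}, $x$ and $y$ are incident to $N_x+N_y+2(n-2-a-N_x-N_y)$ complement edges. Only $xv$ and $yu$ can be counted twice: $x$ is adjacent to $u$ and not to $v$, and symmetrically for $y$. Next, Lemma~\ref{E(V_1,V_2)emptyset} supplies the further complement edges $E_{\overline G}(V_1,V_2)$. I would subtract those already counted among the edges at $u$ or $v$, which are at most $|V_1|+|V_2|$ of them. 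Adding everything up and using the lower bounds on $|V_1|,|V_2|$, I would aim to exceed $n+\lceil n/2\rceil-3+\epsilon_n$. The resulting numerical inequality should be handled in the style of Lemma~\ref{computation}, splitting according to whether $N_x$ is large compared with $d_y(a+2)$.

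The main obstacle is this final counting step. The overlaps between the three families of complement edges (those at $u,v$, those at $x,y$, and $E(V_1,V_2)$) must be controlled without losing too much. In addition, the distance-$3$ mass weakens the bound on $|V_1|$ by about $1/d_y$. If the crude count fails, I would first try a sharper use of the observation that $w$ is the only vertex involved in distance-$3$ pairs. Then every other vertex of $V_1$ is transported at distance exactly $2$, which should recover the full bound $|V_1|\ge 1+a+d_x/d_y$ minus only a single vertex. That single vertex, in turn, is compensated by the $n-1$ complement edges around $u,v$.
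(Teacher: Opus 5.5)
Your opening steps are sound and match the paper's: the reduction to a pair $u\in N_x$, $v\in N_y$ with $\rho(u,v)=3$, the star structure of the distance-$3$ pairs giving $\sum_{\rho=3}\pi\le\frac{1-p}{d_y}$, the bound $|V_1|\ge a+1$, and the $n-3$ fresh complement edges around $u,v$. But the proof stops exactly where the contradiction has to be produced, and the mechanism you propose cannot produce it. Since $u\in V_1$ and $v\in V_2$, the complement edges of $E(V_1,V_2)$ not incident to $u$ or $v$ number only $(|V_1|-1)(|V_2|-1)$, which is $0$ when $V_2=\{v\}$. In that case your count gives nothing beyond $a\ge\lfloor n/2\rfloor-2-\epsilon_n$, and all the counting constraints you list are satisfiable. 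For instance, $n=8$, $a=2$, $|N_y|=1$, $|N_x|=|V_1|=3$, $V_2=\{v\}$ meets the budget of $9$ complement edges with equality. So no refinement of the overlap bookkeeping will push the count above $n+\lceil n/2\rceil-3+\epsilon_n$. Your fallback does not help either: when $V_2=\{v\}$, a larger lower bound on $|V_1|$ creates no new complement edges, and the $n-3$ edges around $u,v$ are already used up in bounding $a$ from below. The ``companion bound on $|V_2|$'', which you never state, is where the real content lies, and it is a transport-cost statement, not an edge count.

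The missing idea is to feed the counting information back into the transport cost. The paper does this in three steps. First, the count gives $a\ge\lfloor n/2\rfloor-2-\epsilon_n$. Second, any $z\in V_2\setminus\{v\}$ would have at least $|V_1|-1\ge a$ fresh complement edges to $V_1\setminus\{u\}$ (by Lemma~\ref{E(V_1,V_2)emptyset}), pushing the total to at least $2n-5>n+\lceil n/2\rceil-3+\epsilon_n$; hence $V_2=\{v\}$. Third, all mass entering $(A_{xy}\cup N_y)\setminus\{v\}$ then moves distance at most $1$, while mass entering $v$ moves at most $3$. This yields $\kappa_{LLY}(x,y)\ge\frac{a+2}{d_x}-\frac{2}{d_y}\ge\frac{(a+2)^2-2(n-2)}{(n-2)(a+2)}$, using $d_x\le n-2$ and $d_y\ge a+2$, and this is positive for $n\ge 8$ by the bound on $a$.

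If you want to keep your framework, you must derive the $|V_2|$ bound with the distance-$3$ correction. From $W(\mu_x^p,\mu_y^p)\le p+(|N_x|-1)\frac{1-p}{d_x}+(|V_2|+1)\frac{1-p}{d_y}$ one gets $|V_2|\ge\frac{d_y(a+2)}{d_x}-1$. This is not the bound of Lemma~\ref{|V_2|}, which presupposes the statement you are proving. Your count excludes $|V_2|\ge 2$, since then $(|V_1|-1)(|V_2|-1)\ge a$. The case $|V_2|=1$ then forces $(a+2)^2\le d_y(a+2)\le 2d_x\le 2(n-2)$, contradicting the lower bound on $a$.
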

\begin{proof}
Suppose to the contrary that there exist two vertices $u_1,v_1$ such that $\pi(u_1,v_1)>0$ and $\rho(u_1,v_1)=3$.
Then, $u_1\in N_x$ and $v_1\in N_y$. Hence, $|N_y|\geq 1$, $\rho(u_1,x)=1$, and $\rho(v_1,y)=1$.

\begin{claim}\label{u1v1}
    There are at least $n-3$ edges in $\overline{G}$ that are incident to at least one of $u_1$ and $v_1$, but are not incident to either $x$ or $y$.
\end{claim}
\begin{proof}
    Since $\rho(u_1,v_1)=3$, all the $n-4$ vertices in $V\backslash \{x,y,u_1,v_1\}$) are adjacent to at least one of $u_1$ and $v_1$ in $\overline{G}$.
    Besides, $u_1v_1$ is also an edge in  $\overline{G}$.
    Overall, there are at least $n-3$ edges satisfying the desired property.
\end{proof}

We next estimate the size of $a$.

\begin{claim}\label{a}
    $a \geq \floor{\frac{n}{2}} -2-\epsilon_n$.
\end{claim}
\begin{proof}
    Let us count the number of edges in $\overline{G}$.
    By Lemma~\ref{edge-in-complement}, there are  exactly $N_x+N_y+2(n-2-a-N_x-N_y)$ edges incident to $x$ or $y$ in $\overline{G}$.
    By Claim~\ref{u1v1}, there are at least $n-3$ edges in $\overline{G}$ that are incident to at least one of $u_1$ or $v_1$, but are incident to neither $x$ nor $y$.
    It follows that 
    $$N_x+N_y+2(n-2-a-N_x-N_y)+(n-3)\leq n+\ceil{\frac{n}{2}}-3+\epsilon_n.$$
    Combined with the fact that $n-2-a-N_x-N_y\geq 0$, the above inequality implies
    $$N_x+N_y+(n-2-a-N_x-N_y)+(n-3)\leq n+\left\lceil \frac{n}{2} \right\rceil-3+\epsilon_n,$$
    and the claim follows directly.
\end{proof}

\begin{claim}\label{432}
    $|V_1| \geq a+1$.
\end{claim} 
\begin{proof}
    Recall that $\rho (u_1,v_1)=3$.
    By Lemma~\ref{four different vertices}, if $\rho(u,v)=3$, then $u=u_1$ or $v=v_1$. 
    Further, either $u=u_1$ for every pair $(u,v)$ with $\rho(u,v)=3$, or $v=v_1$ for all such pairs.
    Otherwise, there are two disjoint pairs $u,v$ and $u',v'$ with $\rho(u,v)=\rho(u',v')=3$ such that $u\ne u_1$ and $v'\ne v_1$, which contradicts Lemma~\ref{four different vertices}.
    Therefore, we have
    \begin{align*}
        \sum\limits_{\rho(u,v)=3}  \pi(u,v) \leq \max\{ \mu_x^p(u_1),\mu_y^p(v_1)\}=\frac{1-p}{d_y}.
    \end{align*}
    Substituting this into \eqref{***} yields
    \begin{align*}
        W(\mu_x^p,\mu_y^p)-1 
        \le -\left(a\cdot\frac{1-p}{d_x}+\frac{1-p}{d_x}+\frac{1-p}{d_y}\right)+ |V_1|\cdot\frac{1-p}{d_x} + \frac{1-p}{d_y}.
    \end{align*}
    It follows that
    \begin{align*}
        \kappa_{LLY}(x,y)=\lim_{p\to 1}\frac{1-W(\mu_x^p,\mu_y^p)}{1-p} \ge \frac{a+1}{d_x}-\frac{|V_1|}{d_x}.
    \end{align*}
    The claim then follows from the assumption that $\kappa_{LLY}(x,y)\leq 0$.
\end{proof}

\begin{claim} \label{V_2=v_1}
$V_2=\{v_1\}$.
\end{claim}

\begin{proof}
Suppose to the contrary that there exists a vertex $z_1\in V_2\setminus \{v_1\}$.
Notice that $z_1$ is not adjacent to any vertex in $V_1\setminus\{u_1\}$ by Lemma~\ref{E(V_1,V_2)emptyset}.
It follows from Claim~\ref{432} that there are $|V_1|-1\geq a$ edges between $z_1$ and $V_1\setminus\{u_1\}$ in $\overline{G}$.
These $a$ edges have not been counted in Lemma~\ref{edge-in-complement} or Claim~\ref{u1v1}.
Therefore, there are at least $N_x+N_y+2(n-2-a-N_x-N_y)+(n-3)+a$ edges in $\overline{G}$.
However, for $n\geq 8$, we have
$$N_x+N_y+2(n-2-a-N_x-N_y)+(n-3)+a > n+\left\lceil \frac{n}{2} \right\rceil-3+\epsilon_n,$$
as $n-2-a-N_x-N_y\ge 0$. 
This leads to a contradiction.
\end{proof}

Since $\pi$ is simple and optimal, we obtain that
\begin{align}\notag
    W(\mu_x^p,\mu_y^p) = &\sum\limits_{u,v\in V} \pi(u,v)\rho(u,v) \\ \label{43}
    =&\sum\limits_{u\in V} \pi(u,y)\rho(u,y)+\sum\limits_{u\in V,v\in A_{xy}} \pi(u,v)\rho(u,v)+\sum\limits_{u\in V,v\in N_y} \pi(u,v)\rho(u,v).
\end{align}
By the choice of $\pi$, we have
\begin{align}\label{44}
    \sum\limits_{u\in V} \pi(u,y)\rho(u,y)=\left(p-\frac{1-p}{d_y} \right)+2\left(\frac{1-p}{d_y}-\frac{1-p}{d_x}\right).
\end{align}
According to Claim \ref{V_2=v_1} and the definition of $V_2$, we deduce that
\begin{align*}
    &\sum\limits_{u\in V,v\in A_{xy}} \pi(u,v)\rho(u,v)+\sum\limits_{u\in V,v\in N_y} \pi(u,v)\rho(u,v)\\
    \le\ &\sum\limits_{u\in V,v\in A_{xy}} \pi(u,v)+\sum\limits_{u\in V,v\in N_y\backslash \{v_1 \}} \pi(u,v)+ 3\sum\limits_{u\in V} \pi(u,v_1)\\
    =\ &a\left( \frac{1-p}{d_y}-\frac{1-p}{d_x} \right) + (|N_y|-1)\cdot \frac{1-p}{d_y}+3\cdot\frac{1-p}{d_y}.
\end{align*}
Substituting the above inequality and \eqref{44} into inequality \eqref{43} yields
\begin{align*}
    W(\mu_x^p,\mu_y^p)\le p - (a+2)\cdot \frac{1-p}{d_x}+(3+a+|N_y|)\cdot \frac{1-p}{d_y}.
\end{align*}
Therefore, we derive
\begin{align*}
    \kappa_{LLY}(x,y)=&\lim_{p\to 1}\frac{1- W(\mu_x^p,\mu_y^p)}{1-p}
    \geq  1+  \frac{a+2}{d_x}- \frac{3+a+|N_y|}{d_y}.
\end{align*}
By the fact that $d_y=a+|N_y|+1$, $d_x\le n-|N_y|-1$, and $|N_y|\ge |\{v_1 \}|=1$, we obtain
\begin{align*}
    \kappa_{LLY}(x,y)
    \ge \frac{a+2}{d_x}-\frac{2}{d_y}
    \geq & \frac{a+2}{n-|N_y|-1}-\frac{2}{a+|N_y|+1}\\
    =& \frac{a^2+3a+4+(4+a)|N_y|-2n}{(n-|N_y|-1)(a+|N_y|+1)}\\
    \geq & \frac{a^2+4a+8-2n}{(n-|N_y|-1)(a+|N_y|+1)}.
\end{align*}
By Claim~\ref{a}, the above expression is positive when $n\ge 8$, which is a contradiction.
This completes the proof of Lemma \ref{d(u,v)<3}.
\end{proof}

Set $V_3\coloneqq \{w\in N_x\setminus V_1|\ \forall z\in V_2,\ \rho(w,z)\geq 2 \}$, $V_4\coloneqq N_x\setminus(V_1\cup V_3)$, and $V_5\coloneqq \{ w\in N_y\cup A_{xy}|\ \exists z\in V_4,\ \pi(z,w)>0\}\setminus V_2$.
Thus, $|V_1|+|V_3|+|V_4|=|N_x|$ and $E(V_3,V_2)=\emptyset$.

\begin{lemma}\label{E(V_1,V_5)=empty}
    $E(V_1,V_5)=\emptyset$.
\end{lemma}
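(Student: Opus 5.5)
We argue by contradiction, in the same style as Lemma~\ref{E(V_1,V_2)emptyset}: we modify $\pi$ so that it stays a simple optimal plan with $\pi(x,y)=p-\frac{1-p}{d_y}$, but $\sum_{\rho(u,v)=1}\pi(u,v)$ strictly increases, contradicting the choice of $\pi$ from Lemma~\ref{maximizer}. So suppose there are $u_1\in V_1$ and $w\in V_5$ with $u_1w\in E(G)$.

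First we collect the ingredients from the definitions.
\begin{itemize}
\item Since $u_1\in V_1$, there is $v_2$ with $\pi(u_1,v_2)>0$ and $\rho(u_1,v_2)\ge 2$. By Lemma~\ref{d(u,v)<3}, $\rho(u_1,v_2)=2$, and by definition $v_2\in V_2$.
\item Since $w\in V_5$, there is $z\in V_4$ with $\pi(z,w)>0$. As $w\notin V_2$, every transport into $w$ has distance $\le 1$, so $\rho(z,w)\le 1$. Since $z\in N_x$ and $w\in N_y\cup A_{xy}$ are distinct, $\rho(z,w)=1$.
\item Since $z\in V_4=N_x\setminus(V_1\cup V_3)$ and $z\notin V_3$, there is $v_3\in V_2$ with $\rho(z,v_3)\le 1$, hence $\rho(z,v_3)=1$.
\end{itemize}

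Next we do a three-cycle exchange. A plain two-swap between $(u_1,v_2)$ and $(z,w)$ would give $(u_1,w)$ at distance $1$ and $(z,v_2)$ at distance possibly $2$. That leaves the cost unchanged and the distance-one mass unchanged, so it gives no gain. We therefore also use $v_3\in V_2$: some $u_3\in V_1$ has $\pi(u_3,v_3)>0$ and $\rho(u_3,v_3)=2$ (again by Lemma~\ref{d(u,v)<3}). Define
\[
\pi'=\pi+\varepsilon\bigl(\delta_{u_1w}+\delta_{zv_3}+\delta_{u_3v_2}-\delta_{u_1v_2}-\delta_{zw}-\delta_{u_3v_3}\bigr),
\]
with $\varepsilon=\min\{\pi(u_1,v_2),\pi(z,w),\pi(u_3,v_3)\}>0$. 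The marginals are preserved, and no diagonal entry or the $(x,y)$ entry is touched, so $\pi'$ is still simple with the same $\pi(x,y)$.

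The cost change is $\varepsilon\bigl(1+1+\rho(u_3,v_2)-2-1-2\bigr)=\varepsilon(\rho(u_3,v_2)-3)\le 0$, using $\rho(u_3,v_2)\le 3$. Hence $\pi'$ is optimal. In particular $\rho(u_3,v_2)=3$ is impossible, since then $u_3\in N_x$, $v_2\in N_y$ would contradict Lemma~\ref{d(u,v)<3} applied to the optimal plan $\pi'$. Moreover, by Lemma~\ref{E(V_1,V_2)emptyset}, $u_3v_2\notin E$, so $\rho(u_3,v_2)=2$ and the cost strictly decreases. That contradicts optimality of $\pi$ outright. Even if we only had cost equality, the distance-one mass changes by $\varepsilon(2-1)>0$, contradicting maximality.

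The main obstacle is that the entries in the cycle may coincide: $u_3=u_1$, $v_3=v_2$, or $v_3=w$. The last is impossible since $w\notin V_2$. If $v_3=v_2$, we may take $u_3=u_1$; then the cycle degenerates to the two-swap $\pi+\varepsilon(\delta_{u_1w}+\delta_{zv_2}-\delta_{u_1v_2}-\delta_{zw})$. Its cost changes by $\varepsilon(1+1-2-1)<0$, which is again a contradiction. If $u_3=u_1$ but $v_3\neq v_2$, the formula for $\pi'$ still makes sense: the terms $\delta_{u_1v_2}$ and $\delta_{u_1v_3}$ both appear, the marginals still balance, and the same cost computation goes through. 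We check each of these degenerate cases explicitly before concluding $E(V_1,V_5)=\emptyset$.
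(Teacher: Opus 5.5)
Your exchange is the paper's own: with $(z_1,\dots,z_6)=(u_1,w,z,v_3,u_3,v_2)$ your $\pi'$ is exactly the paper's three-cycle. The contradiction comes from the same bookkeeping of cost and of distance-one mass. Two steps need repair, though neither is fatal.

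First, $u_1\in V_1$ only yields a far partner $v_2\in V_2\cup\{y\}$, not necessarily $v_2\in V_2$. The mass $\frac{1-p}{d_y}-\frac{1-p}{d_x}$ that $y$ still needs arrives from $N_x$ at distance $2$, and $u_1$'s far mass may go only to $y$. This is why the paper takes $z_6\in V_2\cup\{y\}$. In that case Lemma~\ref{E(V_1,V_2)emptyset} says nothing about the pair $(u_3,v_2)$. You do not need it there, because $\rho(u_3,y)=2$ for every $u_3\in N_x$. The cycle then strictly lowers the cost, and the degenerate case $v_3=v_2$ cannot arise.

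Second, you cannot apply Lemma~\ref{d(u,v)<3} to $\pi'$. That lemma, through Lemma~\ref{E(V_1,V_2)emptyset}, relies on the maximality of the particular plan $\pi$. The step is also superfluous. The cost change $\varepsilon(\rho(u_3,v_2)-3)\le 0$, together with a gain of at least $\varepsilon$ in distance-one mass, already contradicts the choice of $\pi$. This is exactly how the paper concludes, without ever pinning down $\rho(u_3,v_2)$.

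A minor clarification: when $u_3=u_1$ and $v_3\neq v_2$, the two $\delta_{u_1v_2}$ terms cancel. The move is then simply the two-swap of $(u_1,v_3)$ and $(z,w)$, with cost change $-\varepsilon$.
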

\begin{proof}
Suppose to the contrary that there exist $z_1\in V_1$ and $z_2\in V_5$ such that $z_1z_2\in E(G)$.
By the definition of $V_5$, there is a vertex $z_3\in V_4$ such that $\pi(z_3,z_2)>0$.
By the definition of $V_4$, there exists a vertex $z_4\in V_2$ such that $z_3z_4\in E(G)$.
By the definition of $V_1$ and $V_2$, there exists a vertex $z_5\in V_1$ such that $\pi(z_5,z_4)>0$ and $\rho(z_5,z_4)\geq 2$.
Moreover, there is a vertex $z_6\in V_2\cup \{y \}$ such that $\pi(z_1,z_6)>0$ and  $\rho(z_1,z_6)\geq 2$.
Here $z_1$ and $z_5$ may be the same vertex while $z_4$ and $z_6$ may also be the same vertex.
Set 
$$\pi'\coloneqq\pi-\varepsilon(\delta_{z_3z_2}+\delta_{z_5z_4}+\delta_{z_1z_6}-\delta_{z_3z_4}-\delta_{z_5z_6}-\delta_{z_1z_2}),$$
 where $\varepsilon=\min \{ \pi(z_3,z_2), \pi(z_5,z_4), \pi(z_1,z_6) \}> 0$.
Then $\pi'$ is a simple  transport plan from $\mu_x^p$ to $\mu_y^p$.
It follows that
\begin{align*}
    \sum\limits_{u,v\in V}\pi(u,v)\rho(u,v)- \sum\limits_{u,v\in V}\pi'(u,v)\rho(u,v)\geq \varepsilon(1+2+2-1-3-1)=0.
\end{align*}
Since $\pi$ is an optimal transport plan, $\pi'$ is also optimal. 
However, since $\rho(z_3,z_4)= 1$, $\rho(z_1,z_2)=1$, $\rho(z_5,z_4)\geq 2$,  and $\rho(z_1,z_6)\geq 2$, we find
$$\sum_{\rho(u,v)=1}\pi(u,v) -  \sum_{\rho(u,v)=1}\pi'(u,v)\le \varepsilon(1-1-1)< 0,$$
which contradicts the choice of $\pi$. 
\end{proof}

Overall, by Lemma~\ref{E(V_1,V_2)emptyset}, Lemma~\ref{edge-in-complement}, and Lemma~\ref{E(V_1,V_5)=empty}, there are at least $N_x+N_y+2(n-2-a-N_x-N_y)+|V_2||V_3|+|V_1||V_2|+|V_1||V_5|$ edges in $\overline{G}$.
    It follows that 
    $$N_x+N_y+2(n-2-a-N_x-N_y)+|V_2||V_3|+|V_1||V_2|+|V_1||V_5|\leq n+\left\lceil \frac{n}{2} \right\rceil-3+\epsilon_n,$$
which is equivalent to
\begin{align}\label{final}
\floor{\frac n2} -N_x-N_y-2a +|V_2|(|V_1|+|V_3|) +|V_5||V_1|\leq 1+\epsilon_n.
\end{align}

\begin{lemma}\label{|V_1|}
    $|V_1|\geq 1+a+\frac{d_x}{d_y}$.
\end{lemma}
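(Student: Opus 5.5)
The plan is to redo the Wasserstein computation that produced \eqref{***}. Lemma~\ref{d(u,v)<3} now removes the distance-$3$ term, and the non-positivity of $\kappa_{LLY}(x,y)$ will then turn the resulting inequality directly into the desired lower bound on $|V_1|$. Fix $p\in\left[\frac{1}{1+d_y},1\right)$ and let $\pi$ be the plan chosen via Lemma~\ref{maximizer}.

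\textbf{Step 1: splitting the cost.} I would start from \eqref{**},
\[
W(\mu_x^p,\mu_y^p)-1=-\sum_{\rho(u,v)=0}\pi(u,v)+\sum_{\rho(u,v)\ge 2}\pi(u,v)(\rho(u,v)-1),
\]
and use \eqref{PIA} for the zero-distance mass, which equals $(a+1)\frac{1-p}{d_x}+\frac{1-p}{d_y}$. By Lemma~\ref{d(u,v)<3}, every pair with $\pi(u,v)>0$ has $\rho(u,v)\le 2$. Hence the second sum equals $\sum_{\rho(u,v)=2}\pi(u,v)$.

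\textbf{Step 2: bounding the distance-$2$ mass.} By the support observation, every pair $(u,v)\neq(x,y)$ with $\pi(u,v)>0$ and $u\neq v$ has $u\in N_x$. This is because $x$ sends its excess only to $y$ (at distance $1$), and simplicity keeps the mass on $A_{xy}$ and on $y$ in place. Consequently every pair with $\pi(u,v)>0$ and $\rho(u,v)=2$ has $u\in V_1$ by the definition of $V_1$. Here $v$ may be $y$: the remaining mass $\frac{1-p}{d_y}-\frac{1-p}{d_x}$ needed at $y$ comes from $N_x$. Since each $u\in N_x$ carries total mass $\frac{1-p}{d_x}$, we get
\[
\sum_{\rho(u,v)=2}\pi(u,v)\le |V_1|\cdot\frac{1-p}{d_x}.
\]

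\textbf{Step 3: conclusion.} Combining Steps 1 and 2 gives
\[
1-W(\mu_x^p,\mu_y^p)\ge (1-p)\left(\frac{a+1}{d_x}+\frac{1}{d_y}-\frac{|V_1|}{d_x}\right).
\]
Dividing by $1-p$ and letting $p\to1$ (or invoking \eqref{eq:bourne}) yields
\[
0\ge\kappa_{LLY}(x,y)\ge\frac{a+1-|V_1|}{d_x}+\frac1{d_y}.
\]
Multiplying by $d_x>0$ gives $|V_1|\ge 1+a+\frac{d_x}{d_y}$.

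The only delicate point is Step 2: one must check that no distance-$2$ transport can originate from $x$, from $A_{xy}$, or from $y$. This follows from simplicity together with $\pi(x,y)=p-\frac{1-p}{d_y}$, which exhausts the mass that $x$ has beyond $\mu_y^p(x)=\frac{1-p}{d_y}$. The other essential input is Lemma~\ref{d(u,v)<3}, which is precisely what removes the extra $\frac{1-p}{d_y}$ term that weakened Claim~\ref{432}.
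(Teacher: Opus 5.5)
Your proposal is correct and follows the paper's proof essentially step for step. The paper likewise combines Lemma~\ref{d(u,v)<3} with \eqref{***}, which encodes the zero-distance mass \eqref{PIA} and the bound of the distance-$2$ mass by $|V_1|\frac{1-p}{d_x}$, to obtain $\kappa_{LLY}(x,y)\ge\frac{a+1-|V_1|}{d_x}+\frac{1}{d_y}$, and then uses $\kappa_{LLY}(x,y)\le 0$. Your explicit check that all distance-$2$ mass (including the mass sent to $y$) originates in $N_x$ fills in what the paper leaves implicit. Likewise, invoking \eqref{eq:bourne} at the fixed $p$, rather than taking a limit while $V_1$ depends on $p$, is the cleaner way to finish.
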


\begin{proof}
    By Lemma~\ref{d(u,v)<3} and inequality~\eqref{***}, we have
    \begin{align}\notag
        W(\mu_x^p,\mu_y^p)-1 & \le -\left(a\cdot\frac{1-p}{d_x}+\frac{1-p}{d_x}+\frac{1-p}{d_y}\right)+ |V_1|\cdot\frac{1-p}{d_x}.
    \end{align}
    It follows that 
\begin{align*}
    \kappa_{LLY}(x,y)=&\lim_{p\to 1}\frac{1- W(\mu_x^p,\mu_y^p)}{1-p}
    \geq \frac{a+1-|V_1|}{d_x}+ \frac{1}{d_y}.
\end{align*}
    The desired result then follows from the assumption that
    $\kappa_{LLY}(x,y)\leq 0$.
\end{proof}

For simplicity, set $M_0\coloneqq \sum\limits_{u\in V_4, v\in V_2}\pi(u,v)$. Let $\delta \coloneqq 0$ if $|N_y|\ge 1$, and $\delta \coloneqq 1$ if $|N_y|= 0$.

\begin{lemma}\label{|V_2|}
    We have $d_x-\delta d_y>0$ and $|V_2|\geq \frac{d_y(a+2)}{d_x-\delta d_y}+\frac{d_xd_yM_0}{(d_x-\delta d_y)(1-p)}$.
\end{lemma}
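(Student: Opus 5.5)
The plan is a mass count on the distance-$2$ part of the plan $\pi$ fixed above, at the chosen $p\in\left[\frac{1}{1+d_y},1\right)$. First I make the curvature hypothesis exact at this $p$. Since $p\ge \frac{1}{1+d_y}\ge\frac{1}{1+\max\{d_x,d_y\}}$, \eqref{eq:bourne} gives $\kappa_p(x,y)=(1-p)\kappa_{LLY}(x,y)\le 0$, that is, $W(\mu_x^p,\mu_y^p)\ge 1$. By Lemma~\ref{d(u,v)<3} every transported pair has $\rho(u,v)\le 2$. So \eqref{**} and \eqref{PIA} give
\[
S:=\sum_{\rho(u,v)=2}\pi(u,v)\;\ge\;(1-p)\Bigl(\frac{a+1}{d_x}+\frac1{d_y}\Bigr).
\]

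Next I split $S$ according to the target vertex. A distance-$2$ pair $(u,v)$ has $u\in N_x$ and $v\in V_2\cup\{y\}$.

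Mass into $y$. Vertex $y$ needs $\mu_y^p(y)=p$ in total. It receives $p-\frac{1-p}{d_y}$ from $x$. Each $w\in A_{xy}$ has $\mu_x^p(w)=\frac{1-p}{d_x}\le\mu_y^p(w)$, so by simplicity $w$ keeps all its mass and sends nothing to $y$. Hence the remaining $\frac{1-p}{d_y}-\frac{1-p}{d_x}$ arriving at $y$ comes from $N_x$, all at distance $2$.

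Mass into $V_2$. Therefore the distance-$2$ mass arriving in $V_2$ is at least $(1-p)\frac{a+2}{d_x}$. The vertices of $V_4$ are not in $V_1$, so everything they send travels distance at most $1$. Thus the mass $M_0$ that $V_4$ sends into $V_2$ is disjoint from this distance-$2$ mass. So the total mass received by $V_2$ from $N_x$ is at least $(1-p)\frac{a+2}{d_x}+M_0$.

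Capacity of $V_2$. I bound what a single $v\in V_2$ can receive from $N_x$. If $v\in N_y$, it receives $\frac{1-p}{d_y}$. If $v\in A_{xy}$, it receives $\mu_y^p(v)-\mu_x^p(v)=(1-p)\frac{d_x-d_y}{d_xd_y}$ after simplicity, which is also at most $\frac{1-p}{d_y}$.

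Case $\delta=0$. Use the uniform bound $\frac{1-p}{d_y}=(1-p)\frac{d_x-\delta d_y}{d_xd_y}$. Comparing with the lower bound on the received mass gives
\[
|V_2|\,(1-p)\frac{d_x-\delta d_y}{d_xd_y}\ \ge\ (1-p)\frac{a+2}{d_x}+M_0,
\]
and $d_x-\delta d_y=d_x>0$ trivially.

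Case $\delta=1$. Here $N_y=\emptyset$, so $V_2\subseteq A_{xy}$ and each capacity is exactly $(1-p)\frac{d_x-d_y}{d_xd_y}$, giving the same displayed inequality. This is the one delicate point: I must also show $d_x>d_y$. If $d_x=d_y$, every capacity is $0$, but the received mass is at least $(1-p)\frac{a+2}{d_x}>0$ because $p<1$. That is a contradiction, so $d_x-d_y>0$.

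In both cases, dividing the displayed inequality by the positive quantity $(1-p)\frac{d_x-\delta d_y}{d_xd_y}$ yields exactly the asserted lower bound on $|V_2|$. I expect the only real obstacle to be this bookkeeping: checking that the $M_0$ mass is disjoint from the distance-$2$ mass, and that the capacity bounds are correct for both kinds of vertices in $V_2$.
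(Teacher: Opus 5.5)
Your proof is correct. For the bound on $|V_2|$ it is the paper's argument with the bookkeeping reordered. The paper computes $W(\mu_x^p,\mu_y^p)$ exactly through the distance-$2$ mass $m_2$ sent from $N_x$ into $N_y\cup A_{xy}$, and bounds $m_2+M_0+\sum_{v\in V_2}\pi(v,v)\le |V_2|\frac{1-p}{d_y}$. This gives $\kappa_{LLY}(x,y)\ge \frac{a+2}{d_x}-|V_2|\bigl(\frac1{d_y}-\frac{\delta}{d_x}\bigr)+\frac{M_0}{1-p}$, and only then does the paper use $\kappa_{LLY}\le 0$. You instead use $W\ge 1$ first, to lower-bound the distance-$2$ mass, and then compare it with the capacity of $V_2$. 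The ingredients are identical:
\begin{enumerate}[(i)]
\item exactly $\frac{1-p}{d_y}-\frac{1-p}{d_x}$ arrives at $y$ from $N_x$ (say explicitly that $y$ keeps $\pi(y,y)=\frac{1-p}{d_x}$ by simplicity, since that is what produces this remainder);
\item $M_0$ is distance-$1$ mass, so it is disjoint from the distance-$2$ mass;
\item under simplicity, each vertex of $A_{xy}$ has reduced capacity $\frac{1-p}{d_y}-\frac{1-p}{d_x}$.
\end{enumerate}

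The genuine difference is how you prove $d_x-\delta d_y>0$. The paper handles it separately, before any counting. It notes that $N_y=\emptyset$ and $d_x=d_y$ force $N_x=\emptyset$ and $a=d_x-1$. It then invokes Hehl's bound (Lemma~\ref{triangle}) to get $\kappa_{LLY}(x,y)\ge\frac{d_x+1}{d_x}>0$, a contradiction.

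You get the same conclusion from the counting itself. When $V_2\subseteq A_{xy}$ and $d_x=d_y$, the total capacity of $V_2$ is zero. Yet $\kappa\le 0$ requires $V_2$ to receive mass at least $(1-p)\frac{a+2}{d_x}>0$, and this is positive because $p<1$. Equivalently, with $N_x=N_y=\emptyset$ the plan moves only $p-\frac{1-p}{d_y}$ over distance one, so $W<1$.

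Your route makes the lemma self-contained and shows that the external Lemma~\ref{triangle} is unnecessary here. The paper's route keeps the positivity claim logically independent of the transport count, at the cost of citing an outside result.
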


\begin{proof}
    We first prove $d_x-\delta d_y>0$.
    Recall that $d_x\ge d_y$. If $\delta =0$, then $d_x-\delta d_y=d_x>0$. If $\delta =1$, then $N_y=\emptyset$ and $d_x-\delta d_y=d_x-d_y$. We claim that $d_x\ne d_y$. Otherwise, we have $N_x=N_y=\emptyset$, and hence $a=d_x-1$. Then Lemma \ref{triangle} gives
    $$\kappa_{LLY}(x, y)\ge -2+\frac{4}{d_x}+ \frac{3a}{d_x} = \frac{d_x+1}{d_x}>0,$$
    which is a contradiction. Thus, $d_x-\delta d_y>0$.

    We next prove the lower bound on $|V_2|$. Let 
    $$m_2\coloneqq\sum_{u\in N_x}\ \sum_{\substack{v\in N_y\cup A_{xy}\\\rho(u,v)=2}} \pi(u,v).$$
    By Lemma~\ref{d(u,v)<3}, we deduce that
    \begin{align}\notag
        W(\mu_x^p,\mu_y^p)=&\sum_{u,v\in V}\pi(u,v)\rho(u,v) \\ \notag
        =& \sum_{u\in V}\pi(u,y)\rho(u,y)+ \sum_{u\in N_x}\ \sum_{\substack{v\in N_y\cup A_{xy}\\\rho(u,v)=1}} \pi(u,v)+ 2m_2\\ \label{pdy}
        =&\left(p-\frac{1-p}{d_y}\right)+ 2 \left(\frac{1-p}{d_y}-\frac{1-p}{d_x}\right)+\sum_{u\in N_x}\ \sum_{\substack{v\in N_y\cup A_{xy}\\\rho(u,v)=1}} \pi(u,v)
        +2m_2.
    \end{align}
    Observe that
    \begin{align}\notag
    \sum_{u\in N_x}\ \sum_{\substack{v\in N_y\cup A_{xy}\\ \rho(u,v)=1}} \pi(u,v)+m_2 
    &= \sum_{u\in N_x}\ \sum_{v\in V} \pi(u,v)- \sum_{u\in N_x} \pi(u,y)\\ \label{Nx1p}
    &= |N_x|\cdot\frac{1-p}{d_x}-\left(\frac{1-p}{d_y}-\frac{1-p}{d_x}\right).
    \end{align}
    By the definition of $V_1$, we have $\rho (u,v)=1$ for any $u\in V_4$ and $v\in V_2$ with $\pi (u,v)>0$.
    By the definition of $V_2$, we have $v\in V_2$ for any $u\in N_x$ and $v\in N_y\cup A_{xy}$ with $\pi (u,v)>0$ and $\rho(u,v)=2$.
    Hence, by the definition of $m_2$ and $M_0$, we deduce that
    \begin{align*}
        m_2+ M_0+\sum_{v\in V_2}\pi(v,v)\le \sum_{u\in V}\sum_{v\in V_2}\pi(u,v) \le \sum_{v\in V_2}\mu^p_y (v) =|V_2|\cdot\frac{1-p}{d_y}.
    \end{align*}
    Note that if $N_y=\emptyset$, then $V_2\subseteq A_{xy}$. Since $\pi$ is simple, we find
    \begin{align*}
        m_2+ M_0\le |V_2|\cdot\frac{1-p}{d_y}-\sum_{v\in V_2}\pi(v,v)\le |V_2|(1-p)\left(\frac{1}{d_y}- \frac{\delta}{d_x} \right).
    \end{align*}
    Combining equations \eqref{pdy} and \eqref{Nx1p} and the above inequality, we derive
    \begin{align*}
        W(\mu_x^p,\mu_y^p)\le p-\frac{1-p}{d_x}+|N_x|\cdot\frac{1-p}{d_x} +|V_2|(1-p)\left(\frac{1}{d_y}- \frac{\delta}{d_x} \right)- M_0.
    \end{align*}
    Then equation \eqref{eq:bourne} yields
\begin{align}\notag
    \kappa_{LLY}(x,y)=\frac{1- W(\mu_x^p,\mu_y^p)}{1-p}&\ge 1+\frac{1}{d_x}-\frac{|N_x|}{d_x} -|V_2|\left(\frac{1}{d_y}- \frac{\delta}{d_x} \right)+ \frac{M_0}{1-p}\\ \notag
    &=\frac{a+2}{d_x} -|V_2|\left(\frac{1}{d_y}- \frac{\delta}{d_x} \right)+ \frac{M_0}{1-p}.
\end{align}
    The desired result then follows by the assumption that $\kappa_{LLY}(x,y)\le 0$
\end{proof}

\begin{lemma}\label{|V_5|}
$|V_5|\geq \frac{d_y|V_4|}{d_x-\delta d_y}-\frac{d_xd_yM_0}{(d_x-\delta d_y)(1-p)}$.
\end{lemma}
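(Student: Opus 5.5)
The plan is to count the mass that leaves $V_4$ under the fixed plan $\pi$, the same way the capacity of $V_2$ was counted in the proof of Lemma~\ref{|V_2|}. Every $u\in V_4$ carries mass $\mu_x^p(u)=\frac{1-p}{d_x}$. So the total mass leaving $V_4$ is $|V_4|\cdot\frac{1-p}{d_x}$, and I will bound it from above by the capacity of the vertices that can receive it.

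\textbf{Step 1: locate the targets of mass from $V_4$.} Let $u\in V_4$. Then $u\in N_x$, so $\mu_y^p(u)=0$, and since $\pi$ is simple, $\pi(u,u)=0$. We already observed that any $v$ with $\pi(u,v)>0$ satisfies $v=y$ or $v\in N_y\cup A_{xy}$.

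The case $v=y$ cannot occur. Indeed $u\notin N(y)$, so $\rho(u,y)=2$, and $\pi(u,y)>0$ would put $u$ in $V_1$. This contradicts $V_4\cap V_1=\emptyset$.

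Hence every target $v$ of $u$ lies in $N_y\cup A_{xy}$. Either $v\in V_2$, or $v\notin V_2$, in which case $v\in V_5$ by the definition of $V_5$. This gives
\begin{align*}
|V_4|\cdot\frac{1-p}{d_x}=\sum_{u\in V_4}\sum_{v\in V_2}\pi(u,v)+\sum_{u\in V_4}\sum_{v\in V_5}\pi(u,v)= M_0+\sum_{u\in V_4}\sum_{v\in V_5}\pi(u,v).
\end{align*}

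\textbf{Step 2: bound the capacity of $V_5$.} For each $v\in V_5$, the total mass $v$ receives is $\mu_y^p(v)=\frac{1-p}{d_y}$, and part of it is $\pi(v,v)$. Suppose $\delta=1$, that is $N_y=\emptyset$. Then $V_5\subseteq A_{xy}$, and simplicity gives $\pi(v,v)=\min\{\frac{1-p}{d_x},\frac{1-p}{d_y}\}=\frac{1-p}{d_x}$, using $d_x\ge d_y$. In both cases this yields
\begin{align*}
\sum_{u\in V_4}\sum_{v\in V_5}\pi(u,v)\le |V_5|(1-p)\left(\frac{1}{d_y}-\frac{\delta}{d_x}\right).
\end{align*}

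\textbf{Step 3: combine.} From Steps 1 and 2 we get
\begin{align*}
\frac{|V_4|}{d_x}\le \frac{M_0}{1-p}+|V_5|\,\frac{d_x-\delta d_y}{d_xd_y}.
\end{align*}
Lemma~\ref{|V_2|} gives $d_x-\delta d_y>0$. Multiplying by $\frac{d_xd_y}{d_x-\delta d_y}$ then gives exactly the claimed inequality.

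The only delicate point is Step 1: ruling out transport from $V_4$ to $y$, and checking that all remaining targets fall in $V_2\cup V_5$. Both follow directly from the definitions of $V_1$, $V_4$ and $V_5$.
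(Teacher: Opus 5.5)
Your proof is correct and follows the paper's argument essentially step for step. Like the paper, you rule out transport from $V_4$ to $y$ via the definition of $V_1$, split the outgoing mass of $V_4$ as $M_0$ plus the mass sent into $V_5$, and bound the latter by $|V_5|(1-p)\left(\frac{1}{d_y}-\frac{\delta}{d_x}\right)$, using simplicity in the case $N_y=\emptyset$.
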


\begin{proof}
    By the definition of $V_1$, if $\pi(u,y)>0$ for $u\in N_x$, then $u\in V_1$.
    Thus, by the definition of $V_5$ and $M_0$, we arrive at
    \begin{equation}\label{v41p}
        |V_4|\cdot\frac{1-p}{d_x}=\sum_{u\in V_4}\sum_{v\in V}\pi(u,v) = M_0+ \sum_{u\in V_4}\sum_{v\in V_5}\pi(u,v).
    \end{equation}
    Note that if $N_y=\emptyset$, then $V_5\subseteq A_{xy}$. Since $\pi$ is simple, we find
    $$\sum_{u\in V_4}\sum_{v\in V_5}\pi(u,v)\le |V_5|\cdot\frac{1-p}{d_y}-\sum_{v\in V_5}\pi(v,v) \le |V_5|(1-p)\left(\frac{1}{d_y}- \frac{\delta}{d_x} \right).$$
    Combining the above inequality with equation \eqref{v41p} yields the desired bound on $|V_5|$.
\end{proof}

By the definition of $V_4$, if $a=|N_y|=0$, then $|V_4|=0$.
By Lemma~\ref{|V_1|}, Lemma~\ref{|V_2|}, Lemma~\ref{|V_5|}, and the fact that $d_x=1+a+|N_x|$, $d_y=1+a+|N_y|$, $|N_x|\geq |N_y|$, $|N_x|+|N_y|+a+2\leq n$, and $|V_1|+|V_3|+|V_4|=|N_x|$, we can apply Lemma~\ref{computation} with $M=\frac{d_xd_yM_0}{(d_x-\delta d_y)(1-p)}$ to derive 
\begin{align}\label{fin}
    \floor{\frac{n}{2}}-|N_x|-|N_y|-2a+|V_2|(|V_1|+|V_3|)+|V_5||V_1|\geq 2.
\end{align}
Combined with inequality \eqref{final}, we deduce that $\epsilon_n= 1$ and that inequality \eqref{fin} takes equal.
Then by Lemma~\ref{computation}, we have $|V_1|=\left\lceil\frac{n}{2}\right\rceil$, $|V_2|=1$, and $|V_4|=\left\lfloor\frac{n}{2}\right\rfloor-3$. Set $V_2=\{w\}$.
Then $V=\{x,y,w\}\cup V_1 \cup V_4$.
By Lemma~\ref{E(V_1,V_2)emptyset}, the number of edges in $\overline{G}$ is at least $$|V_2|+(|V_1|+|V_4|)+|V_1||V_2|=n+\ceil{\frac{n}{2}}-2.$$
Thus, $E(\overline{G})=\{xw\}\cup \{yz\ |\ z\in V_1\cup V_4\}\cup \{z_1z_2\ |\ z_1\in V_1, z_2\in V_2\}$, and the graph $G$ is uniquely determined. Next, we show that the curvature of $xy$ is non-positive, and hence $G$ is indeed an extremal graph.
A schematic of $G$ is depicted in Figure \ref{fig:enter-label}.
\begin{figure}[htbp]
    \centering    \begin{tikzpicture}
    \tikzset{
        big circle/.style={circle, draw, minimum size=1.4cm, inner sep=0pt, font=\Large},
        dot/.style={circle, fill=black, minimum size=4pt, inner sep=0pt}
    }

    \node[big circle] (Ka) at (-2.2, 0) {$K_{\left\lceil\frac{n}{2}\right\rceil}$};
    \node[big circle] (Kb) at (0, 0) {$K_{\left\lfloor\frac{n}{2}\right\rfloor-3}$};
    \node[dot, label=right:{\Large $w$}] (w) at (2.2, 0) {};

    \node[dot, label=left:{\Large $x$}] (x) at (-1.1, 2) {};
    \node[dot, label=right:{\Large $y$}] (y) at (1.6, 2) {};

    \draw (Ka) -- (Kb);
    \draw (Kb) -- (w);

    \draw (x) -- (y);

    \draw (x) -- (Ka);
    \draw (x) -- (Kb);
    \draw (y) -- (w);

\end{tikzpicture}
    \caption{A schematic of the graph $G$.}
    \label{fig:enter-label}
\end{figure}

Consider the function $f:N(x)\cup N(y)\rightarrow\mathbb{Z}$ given by
$$f(z)= 
\begin{cases}
-1, & \text { if } z\in A;\\
0, & \text { if } z\in \{x\} \cup B; \\
1, & \text { if } z\in \{y,w\}.
\end{cases}$$
Then, $f(y)-f(x)=1$ and $f\in Lip(1)$.
By Theorem~\ref{Curvature via the Laplacian}, we have 
\[\kappa_{LLY}(x,y)\leq \Delta f(x)-\Delta f(y)=\frac{1}{n-2}\left(1-\ceil{\frac{n}{2}}\right)-\frac{1}{2}(1+0-2)\leq 0.\]
This completes the proof. 

\begin{remark}
    Note that the graph depicted in Figure \ref{fig:enter-label} still satisfies $\kappa_{LLY}(x,y)\le 0$ when $n$ is odd.
    Moreover, by a more detailed analysis in the proof of Lemma~\ref{computation}, one can check that, for odd $n$, there are two additional sets of parameters for which inequality~\eqref{inequality} becomes an equality.
    Namely, $V_1=\frac{n-1}{2}, a=0,V_2=N_y=1,V_4=\frac{n-7}{2},V_3=V_5=0$; and $V_1=\frac{n+3}{2}, a=V_2=1,V_4=\frac{n-9}{2},V_3=V_5=N_y=0$.
    Similarly, each set of parameters determines $E(\overline{G})$ and hence determines the graph $G$.
    In the next section, we will verify that the resulting graphs are both extremal graphs. Therefore, for odd $n\ge 13$, Theorem~\ref{main} indeed has exactly three extremal graphs.
\end{remark}

\section{Sharpness discussions}\label{section:sharpness}

For odd $n\ge 9$, we present two additional extremal graphs $G_1$ and $G_2$ of order $n$, distinct from the graph $G$ depicted in Figure \ref{fig:enter-label}. 
Each has exactly $\frac{n^2-3n}{2}-\left\lceil\frac{n}{2}\right\rceil+2$ edges and at least one edge of non-positive curvature.
This in particular illustrates that the condition that $n$ is even in Theorem \ref{main} is necessary.

Let $G_1=(V,E)$ be the graph  of order $n$ defined as follows.
The vertex set $V\coloneqq\{x,y,w,u\}\sqcup A\sqcup B$, where $|A|=\frac{n-1}{2}$ and $|B|=\frac{n-7}{2}$.
The edge set 
$E\coloneqq\{xy\}\sqcup \{yw\}\sqcup E_1\sqcup E_2\sqcup E_3,$ where $E_1\coloneqq\{uv\ |\ u, v\in A\cup B\cup \{x \} \text{ and } u\neq v\}$, $E_2\coloneqq\{wv\ |\ v\in B\}$, and $E_3\coloneqq\{uv\ |\ v\in A\cup B\cup\{w\}\}$.
Then, $G_1$ has exactly $\frac{n^2-3n}{2}-\left\lceil\frac{n}{2}\right\rceil+2$ edges.
A schematic of $G_1$ is depicted in Figure~\ref{G_1}.
\begin{figure}[htbp]
    \centering     \begin{tikzpicture}

  \tikzset{
        big circle/.style={circle, draw, minimum size=2.2cm, inner sep=0pt, font=\Large},
        dot/.style={circle, fill=black, minimum size=4pt, inner sep=0pt}
    }

\node[big circle, font=\huge] (Ka) at (-1.25,0) {$K_{\frac{n-1}{2}}$};
\node[big circle, font=\huge] (Kb) at (1.90,0) {$K_{\frac{n-7}{2}}$};

\node[dot, label=left:{\Large $x$}] (x) at (0,2.55) {};
\node[dot, label=right:{\Large $y$}] (y) at (3.65,2.55) {};
\node[dot, label=right:{\Large $w$}] (w) at (4.85,0) {};
\node[dot, label=below:{\Large $u$}] (u) at (1.90,-2.45) {};

\draw (x) -- (y);
\draw (y) -- (w);
\draw (w) -- (u);

\draw (Ka) -- (Kb);
\draw (Ka) -- (x);
\draw (Kb) -- (x);
\draw (Ka) -- (u);
\draw (Kb) -- (u);
\draw (Kb) -- (w);

\end{tikzpicture}
    \caption{A schematic of the graph $G_1$.}
    \label{G_1}
\end{figure}

We next estimate the Lin--Lu--Yau curvature $\kappa_{LLY}(x,y)$ of $xy$.
Consider the function $f_1:N(x)\cup N(y)\rightarrow\mathbb{Z}$ given by
$$f_1(z)= 
\begin{cases}
-1, & \text { if } z\in A;\\
0, & \text { if } z\in \{x\} \cup B; \\
1, & \text { if } z\in \{y,w\}.
\end{cases}$$
Then, $f_1(y)-f_1(x)=1$ and $f_1\in Lip(1)$.
By Theorem~\ref{Curvature via the Laplacian}, we have 
\[\kappa_{LLY}(x,y)\leq \frac{1}{n-3}\left(1-\frac{n-1}{2}\right)-\frac{1}{2}(1+0-2)= 0.\]

Let $G_2=(V,E)$ be the graph of order $n$ defined as follows.
The vertex set $V\coloneqq\{x,y,w\}\sqcup A\sqcup B$, where $|A|=\frac{n+3}{2}$ and $|B|=\frac{n-9}{2}$.
The edge set $E\coloneqq\{xy\}\sqcup \{yw\}\sqcup E_1\sqcup E_2$, where $E_1\coloneqq\{uv\ |\ u, v\in A\cup B\cup \{x \} \text{ and } u\neq v\}$ and $E_2\coloneqq\{wv\ |\ v\in B\cup\{x\}\}$.
Then, $G$ has exactly $\frac{n^2-3n}{2}-\left\lceil\frac{n}{2}\right\rceil+2$ edges.
A schematic of $G_2$ is depicted in Figure~\ref{G_2}.
\begin{figure}[htbp]
    \centering    \begin{tikzpicture}
    \tikzset{
        big circle/.style={circle, draw, minimum size=1.4cm, inner sep=0pt, font=\Large},
        dot/.style={circle, fill=black, minimum size=4pt, inner sep=0pt}
    }

    \node[big circle] (Ka) at (-2.2, 0) {$K_{\frac{n+3}{2}}$};
    \node[big circle] (Kb) at (0, 0) {$K_{\frac{n-9}{2}}$};
    \node[dot, label=right:{\Large $w$}] (w) at (2.2, 0) {};

    \node[dot, label=left:{\Large $x$}] (x) at (-1.1, 2) {};
    \node[dot, label=right:{\Large $y$}] (y) at (1.6, 2) {};

    \draw (Ka) -- (Kb);
    \draw (Kb) -- (w);

    \draw (x) -- (y);

    \draw (x) -- (Ka);
    \draw (x) -- (Kb);
    \draw (y) -- (w);
\draw (x) -- (w);
\end{tikzpicture}
    \caption{A schematic of the graph $G_2$.}
    \label{G_2}
\end{figure}
Similarly, consider the function $f_2:N(x)\cup N(y)\rightarrow\mathbb{Z}$ given by
$$f_2(z)= 
\begin{cases}
-1, & \text { if } z\in A;\\
0, & \text { if } z\in \{x\} \cup B; \\
1, & \text { if } z\in \{y,w\}.
\end{cases}$$
Then, Theorem~\ref{Curvature via the Laplacian} yields
\[\kappa_{LLY}(x,y)\leq \frac{1}{n-1}\left(2-\frac{n+3}{2}\right)-\frac{1}{2}(1+0-2)= 0.\]

Finally, we present an example for $7\leq n \leq 11$ to illustrate that the conditions $n\geq 8$ and $n\geq 12$ in Theorem~\ref{main} are both necessary.

Let $H=(V,E)$ be the graph of order $n$ defined as follows.
The vertex set $V\coloneqq\{x,y\}\sqcup A\sqcup B$, where $|A|=a$, $|B|=b$, and $a+b+2=n$.
The edge set $E\coloneqq\{xy\}\sqcup E_1\sqcup E_2$, where $E_1\coloneqq\{uv\ |\ u, v\in A\cup B \text{ and } u\neq v\}$ and $E_2\coloneqq\{xv\ |\ v\in B\}$.
A schematic of $H$ is depicted in Figure~\ref{H}.

\begin{figure}[htbp]
    \centering    \begin{tikzpicture}[
    scale=1,
    every node/.style={font=\Large},
    point/.style={circle, fill=black, inner sep=2pt}
]

\coordinate (A) at (0,0);
\coordinate (B) at (2.2,0);
\coordinate (P) at (4.2,0);
\coordinate (Q) at (5.4,0);

\draw (A) -- (B);
\draw (B) -- (Q);

\node[
    circle,
    draw,
    fill=white,
    minimum size=1.8cm,
    inner sep=0pt, font=\huge
] at (A) {$K_a$};

\node[
    circle,
    draw,
    fill=white,
    minimum size=1.8cm,
    inner sep=0pt, font=\huge
] at (B) {$K_b$};

\node[point, label=below:{\Large $x$}] at (P) {};
\node[point, label=below:{\Large $y$}] at (Q) {};

\end{tikzpicture}
    \caption{A schematic of the graph $H$.}
    \label{H}
\end{figure}

For $n=8,9,10,11$, select $a=\ceil{\frac{n}{2}}$ and $b=\floor{\frac{n}{2}}-2$.
Then $H$ has exactly $\frac{n^2-3n}{2}-\left\lceil\frac{n}{2}\right\rceil+2$ edges.
It is direct to check that $H$ has some edges with non-positive Lin--Lu--Yau curvature by using the graph curvature calculator~\cite{CKLLS22}, which is a freely accessible interactive app at 
https://www.mas.ncl.ac.uk/graph-curvature/.
This illustrates that the condition $n\geq 12$ in Theorem~\ref{main} is necessary.

For $n=7$, select $a=3$ and $b=2$.
Then $H$ has exactly $\frac{n^2-3n}{2}-\left\lceil\frac{n}{2}\right\rceil+3=13$ edges.
Similarly, one can check that $H$ has some edges with negative Lin--Lu--Yau curvature by using the graph curvature calculator~\cite{CKLLS22}, as shown in Figure~\ref{n=7}.
This illustrates that the condition $n\geq 8$ in Theorem~\ref{main} is also necessary.




\begin{figure}[htbp]
    \centering
\includegraphics[width=0.7\textwidth,height=0.3\textwidth]{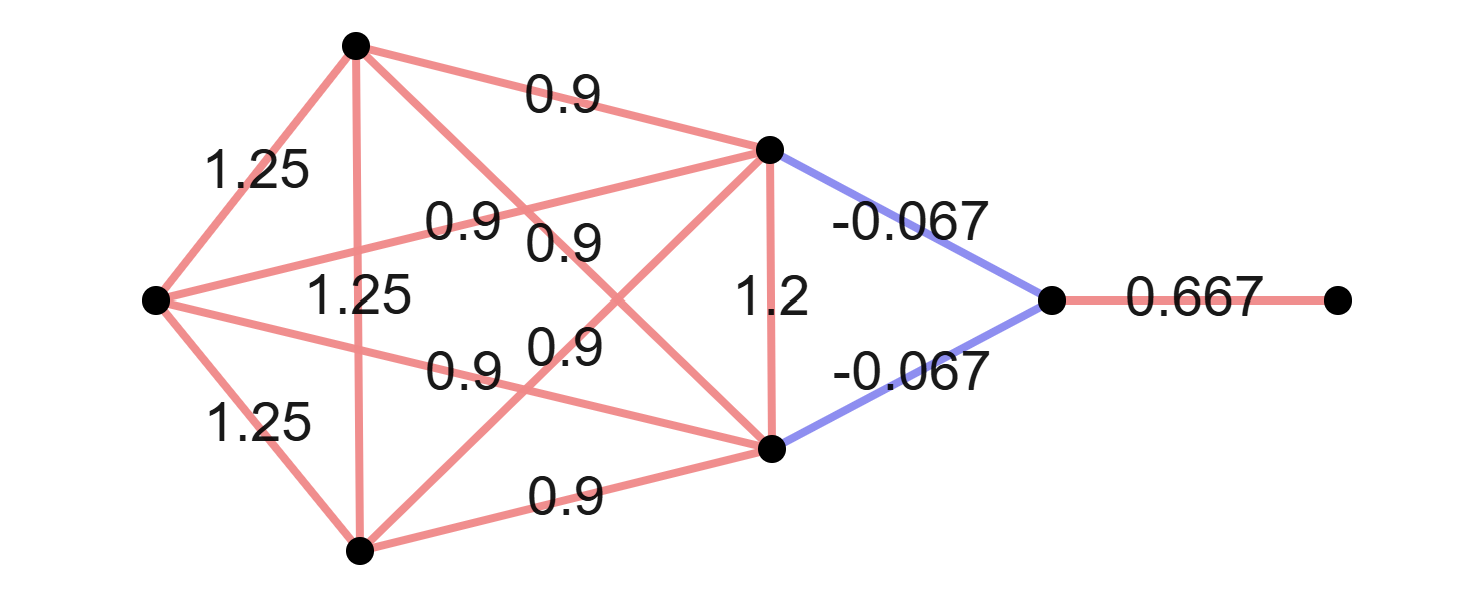}
    \caption{A graph of order 7 containing edges of negative LLY curvature.}
    \label{n=7}
\end{figure}

\section{Outlook}\label{section:extremal-problems}
Theorem~\ref{main} determines the sharp edge threshold above which all edges have positive Ollivier/Lin--Lu--Yau curvature.
In this section, we conclude by formulating several open problems suggested by Theorem~\ref{main}.

\begin{problem}[Curvature extremal problem 1]
Let $n$ be a positive integer and $K$ be a real number.
Determine the minimum $\mathrm{ex}(n,K)$ such that every graph of order $n$ with more than $\mathrm{ex}(n,K)$ edges has Lin--Lu--Yau curvature larger than $K$ on each edge.
\end{problem}

Under the above notion, Theorem~\ref{main} gives $\mathrm{ex}(n,0)=\frac{n^2-3n}{2}-\left\lceil\frac{n}{2}\right\rceil+2$. It might also be interesting to allow certain edges with non-positive curvature.

\begin{problem}[Curvature extremal problem 2]
Let $n$ and $m$ be two positive integers.
Determine the minimum $\mathrm{ex}(n,m)$ such that every graph of order $n$ with more than $\mathrm{ex}(n,m)$ edges has positive Lin--Lu--Yau curvature on all but at most $m$ edges.
\end{problem}

It is also natural to consider other extremal problems, such as the saturation problems.

\begin{problem}[Curvature saturation problem]
Let \(n\) be a positive integer. Determine the minimum number
\(\operatorname{sat}(n)\) of edges in a graph \(G\) of order
\(n\) with the following properties:
\(G\) has at least one edge of non-positive Lin--Lu--Yau curvature, but, for
every non-edge \(uv\), the graph \(G+uv\) has positive Lin--Lu--Yau curvature.
\end{problem}

One can also consider curvature notions other than Ollivier/Lin--Lu--Yau curvature, such as Bakry--\'Emery curvature~\cite{Bakry, LY10} and resistance curvature~\cite{resistance}.



\medskip

\

\noindent\textbf{AI assistance statement:} 
The authors used AI-assisted tools in the proof of Lemma~\ref{computation}.
All other mathematical ideas and proofs were developed independently by the authors without the use of AI-assisted tools.

\section*{Acknowledgement}
\noindent K.C. is grateful to Tian Wu for helpful discussions regarding Lemma~\ref{computation}.
This work is supported by the National Key R\&D Program of China 2023YFA1010200. K.C.'s research is supported by the National Natural Science Foundation of China No. 125B1009. S.L.'s research is supported by the Scientific Research Innovation Capability Support Project for Young Faculty SRICSPYF-ZY2025160 and the National Natural Science Foundation of China No. 12431004.

\end{document}